\newcommand{\mytitle}{Linking discrete and continuum diffusion models:
Well-posedness and stable finite element discretizations}
\newcommand{\myabstract}{%
  In the context of mathematical modeling, it is sometimes convenient to integrate models of different nature. These types of combinations, however, might entail difficulties even when individual models are well-understood, particularly in relation to the well-posedness of the ensemble. In this article, we focus on combining two classes of dissimilar diffusive models: the first one defined over a continuum and the second one based on discrete equations that connect average values of the solution over disjoint subdomains. For stationary problems, we show unconditional stability of the linked problems and then the stability and convergence of its discretized counterpart when mixed finite elements are used to approximate the model on the continuum. The theoretical results are highlighted with numerical examples illustrating the effects of linking diffusive models. As a side result, we show that the methods introduced in this article can be used to infer the solution of diffusive problems with incomplete data.}
\newcommand{\myack}{C.S. and I.R. acknowledge the financial support from Madrid's regional government through a grant with IMDEA Materials addressing research activities on SARS-COV 2 and COVID-19, and financed with REACT-EU resources from the European regional development fund.}
\newcommand{\mypackages}{%
	\usepackage{amssymb}
	\usepackage{amsmath}
	\usepackage{amsthm}
	\usepackage{graphicx}
	\usepackage{xfrac}
	\usepackage{enumitem}
	\usepackage{xcolor}
}
\newcommand{\mymacros}{%
	\newcommand{\concept}[1]{\textbf{\emph{##1}}}
	\newcommand{\defined}{:=}
	\newcommand{\dV}{\;\mathrm{d}V}
	\newcommand{\dx}{\;\mathrm{d}x}
	\renewcommand{\div}{{\mathop{\mathrm{div}}}}
	\newcommand{\mbs}[1]{\boldsymbol{##1}}
	\newcommand{\pairing}[2]{\langle{##1},{##2}\rangle}
	\newcommand{\dd}[2]{\frac{\mathrm{d} ##1}{\mathrm{d} ##2}}
	\newcommand{\pd}[2]{\frac{\partial{##1}}{\partial{##2}}}
	\newcommand{\trace}{{\mathop{\mathrm{tr}}}}
	\newcommand{\uptohere}{\centerline{\textcolor{blue}{\rule{6cm}{0.2cm}}}}
	\let\oldLambda=\Lambda\renewcommand{\Lambda}{\mathit{\oldLambda}}
	\let\oldGamma=\Gamma\renewcommand{\Gamma}{\mathit{\oldGamma}}
	\newcommand{\ignacio}[1]{\par\noindent\texttt{\color{orange}$\triangleleft$~##1}}
	\newcommand{\christina}[1]{\par\noindent\texttt{\color{red}$\triangleleft$~##1}}
	\newcommand{\david}[1]{\par\noindent\texttt{\color{green}$\triangleleft$~##1}}
	\newcommand{\resolved}[1]{\par\noindent\texttt{\color{blue}##1}}
	
	\usepackage{hyperref}
	\usepackage[capitalise]{cleveref}
	\usepackage{multirow}
	\usepackage{multicol}
	\usepackage{caption}
	\usepackage{subcaption}
	\captionsetup[sub]{font=normalsize,labelfont={bf,sf}}
}
\newcommand{\mybibstyle}{unsrt}
\newtheorem{theorem} {Theorem}[section]
\theoremstyle{definition}
\newtheorem{examplex}[theorem]{$\triangleright\;$Ejemplo}
\theoremstyle{remark}
\begin{document}


\newtheorem{remarks}{Remarks}%

\title{\mytitle}
\author{Christina Schenk$^{1}$, David Portillo$^{2,1}$, Ignacio Romero$^{2,1}$\thanks{Corresponding author: ignacio.romero@upm.es}}
\date{$^1$IMDEA Materials Institute, Eric Kandel 2, Tecnogetafe, 28906 Madrid, Spain
	\\[2ex]$^2$Universidad Polit\'ecnica de Madrid,
	Jos\'{e} Guti\'{e}rrez Abascal, 2, 28006 Madrid, Spain}
\newenvironment{acknowledgements}{\section*{Acknowledgements}}{}
	\maketitle


\begin{abstract}
	\myabstract
\end{abstract}
	
{\bf Keywords: }Mixed finite elements; inf-sup condition; multi-scale; network model; diffusion problems; stability

\section{Introduction}
\label{sec-intro}
Diffusion problems of interest in Applied Math and Engineering can be studied with discrete models as well as partial differential equations (PDEs). The first approach is naturally simpler than the second one since it sidesteps the difficulties that result from the spatial description of the solution fields. In addition, the solution of these simple models can be approximated very efficiently at the expense of all spatial details, in contrast with the approximation of PDEs. The latter, in fact, invariably requires working with (large) systems of equations that arise from the spatial approximation of the problem. Not surprisingly, the co-existence of a \emph{hierarchy} of models for a single physical phenomenon is a common trait of most, if not all, scientific endeavors.

Precisely because several models of different complexity often exist for one single physical problem, sometimes it proves convenient to combine several of them to exploit their relative advantages. For example, when studying complex deformable bodies, it has been proven effective to combine models for beams and solids \cite{surana1980uy,romero2018iu,steinbrecher2021ge}, since the economy of the beam equations can be exploited to analyze whole structures, whereas the (more complex) equations of solid mechanics are used to describe with detail the mechanics of regions with intricate stress distributions. This same strategy can be found in the analysis of complex --- typically multiscale --- problems in, e.g., the study of ground and subsurface hydraulic flow \cite{furman2008al}, arteries and the heart \cite{moghadam2013hs}, capillary network and major blood vessels \cite{angelo2008al,quarteroni2016xl}, etc. In these situations, always motivated by a reduction of complexity or computational cost, each of the connected models might be well-known, but linking them poses difficulties. In particular, the well-posedness of the joint problem is a delicate matter: even when each individual model is described with well-posed equations, one still needs to prove that the connection does not spoil this property. Hence, links between models of different nature are of theoretical as well as practical interest.

In this article, we analyze the coupled solution of two diffusive models, the first one defined over a continuum --- and thus described by a PDE --- and the second one defined over discrete network elements --- and described with an ordinary differential equation (ODE). A prototypical example of the problem of interest in this article is thermal equilibrium. Its most general description in a three-dimensional body employs Poisson's equation, the paradigmatic elliptic model. In addition, when a body is slender, its thermal equilibrium might be described by a second-order ODE. In practical applications, however, we might be interested in modeling the temperature of a part that is best described as a conductive body with a \emph{wire} that connects some regions, a wire that need not be inside the body. While one could model the ensemble as a continuum, it proves more convenient --- especially when using numerical discretizations --- to use different models for the bulk and the wire. This is a relevant problem, for example, in the thermal design of printed circuit boards (PCBs) where the thermal conductivity of the electronic components and their (thin) connections are very different, as also their geometries. A second relevant example that involves coupled models of different nature appears when modeling the diffusion of infectious diseases. While multi-compartmental network models have been used for centuries \cite{bernoulli1760} and many results have been obtained (e.g., \cite{martcheva2015}), they lack spatial resolution. More recent efforts --- especially related to COVID-19 --- attempt to use diffusive boundary value problems instead \cite{grave2021, GRAVE2022115541, Viguerie2020, guglielmi2022}.

Given its practical and theoretical interests, the numerical approximation of problems that mix models of different dimensionality has been studied before (e.g., \cite{laurino2019ps, steinbrecher2021ge, kuchta2021on, hagmeyer2022ac, angelo2008al}). In particular, some works have studied the approximation of mixed-dimensional problems where the low-dimension model is embedded within the high-dimensional one, with coupling \emph{fluxes} between them \cite{laurino2019ps,angelo2008al}. One of the main difficulties for coupling 3D-1D models is that the trace operator from the 3D domain to the 1D domain is not well-posed if the lower dimensional problem is a 1D manifold with a co-dimension larger than one. Here, however, we are only interested in coupling elements of a discrete (network) model with a continuum model. In contrast with the references before, the network is just a mathematical abstraction, where edges represent connections (i.e., carriers of flux) and are not, strictly speaking, 1D submanifolds of the continuum. More specifically, the networks that we will study in this work model diffusive phenomena between disjoint regions of the continuum domain, connecting the average values of the linked variables through a discrete diffusive law. Note that our solution approach differs from the so-called network diffusion models aiming to solve PDEs on discrete graphs\cite{newman2010, kuhl2021}.

Given the difficulties in formulating and analyzing general coupled continuum-discrete models, we restrict our presentation to the study of the problems that link two dissimilar models, each of them being the solution of a minimization problem. The motivation for this choice is double: first, many diffusive problems of interest have this form and, second, they are easily amenable to analysis. In particular, if two different models are of this type, their link might be conveniently tackled using the classical method of Lagrange multipliers that results in a \emph{saddle point} problem. The mathematical aspects of saddle point problems and their approximations with Galerkin-type methods~\cite{roberts1989vm,brezzi1991tn,auricchio2005tt} are well-understood both in $\mathbb{R}^n$ \cite{rockafellar1970ww} as well as in Hilbert spaces. The current interest remains, thus, in formulating new links for different models and proving that the resulting coupled problem as well as its discretization are stable.

In the current work, we study linked formulations of bulk and one-dimensional diffusive network models, aiming to provide a rigorous footing for the family of continuum/discrete problems mentioned above. The results presented herein address first the continuum model, i.~e., the well-posedness of the \emph{mixed} problem that appears when the bulk and one-dimensional discrete type problems are linked. Once this problem is studied using standard tools, we formulate mixed finite elements for discrete/continuum diffusion problems and analyze also their well-posedness. In contrast to elliptic problems, finite element discretizations of mixed problems do not inherit the well-posedness from the continuum counterpart and a different stability analysis has to be performed. In this work, we show the unconditional stability of the continuum/discrete problems of interest by proving a discrete \emph{inf-sup} condition. These theoretical findings are then illustrated with numerical examples. The main result of this work is thus that linked discrete/continuum finite element formulations of the coupled diffusive problems considered are unconditionally stable and convergent. Also, as we will show, the ideas of the proposed coupling can be used for the solution of diffusive problems with missing data or partial information. An elegant solution to this seemingly unrelated problem --- closer to data science than diffusion --- can be obtained in a straightforward manner employing the formulation presented in this work.

The article is structured as follows. In \Cref{Sec:Problem} we describe the mathematical formulation of the continuum/discrete problems. The theoretical investigations that prove the well-posedness of the joint problem are presented in \Cref{Sec:Analysis}. Then, in \Cref{Sec:ApproxProblem}, we introduce the finite element discretization of the coupled problem and prove that the well-posedness of the original  problem is inherited by the fully discrete problem. In \Cref{Sec:Results}, we demonstrate the applicability of the previously introduced concepts and methods for several problems that are of practical interest. Finally, \Cref{Sec:Conclusion} summarizes the main results of the article.

\section{Mixed-dimensional Poisson problems}\label{Sec:Problem}
In this section, we define the diffusive problems whose analysis and approximated solution are the central topic in this article. The first part of this boundary-value problem describes the stationary solution of a transport problem in a continuum and is ubiquitous in Mathematical Physics. For concreteness, in the following, we use the language of thermal analysis assuming that the unknown field is the temperature in a body. Throughout the article, all the equations could be reinterpreted in terms, for example, of mass concentration or electrical charge. The second type of problem refers to the temperature distribution on a one-dimensional \emph{wire} whose solution is given by a second-order \emph{ordinary} differential equation. In certain situations, moreover, a closed-form solution to this problem can be found, yielding a discrete diffusion relation between the temperature at the ends of the wire. Finally, we describe a joint solution of these two problems when they are solved simultaneously, that is when we consider a body in thermal equilibrium with two or more disjoint subsets connected by a thermal wire.

\subsection{The Poisson problem in a continuum domain}
\label{sec-statement}
The continuum body where we would like to study its temperature distribution is a bounded open
set $\Omega\subset\mathbb{R}^d$ with boundary denoted as $\partial\Omega$. For simplicity, in what
follows, we will restrict to $d=2$, but no fundamental problem arises in the three-dimensional case.
The temperature in this body is a field $\phi\in H_0^1(\Omega)$, the Hilbert space of (Lebesgue) square-integrable functions with square-integrable weak first derivatives and vanishing trace at the boundary. If $f\in H^{-1}(\Omega)$ is a known field of heat supply, the stationary Poisson problem can be written in its standard strong form
\begin{equation}
	\begin{split}
		-\kappa\, \Delta \phi &= f \qquad  \text{ in } \Omega\ ,\\
		\phi &= 0 \qquad \text{ on } \partial\Omega\ .\\
	\end{split}
	\label{eq-poisson}
\end{equation}
Here $\kappa$ is the thermal conductivity of the medium and will be assumed to be constant, for simplicity, and $\Delta$ is the Laplacian operator. See,
e.g., \cite{evans1999wj} for a detailed description of this canonical elliptic boundary-value problem.

With a view to the analysis and discretization of Eq.~\eqref{eq-poisson}, we rewrite the Poisson problem in weak form. For
that we define $\mathcal{U}\equiv H_0^1(\Omega)$ and recall that its norm is given, for any $\phi\in\mathcal{U}$ by the standard expression
\begin{equation}
\label{eq-spaceU}
	\| \phi\|_{\mathcal{U}} :=
	\left(\|\phi\|^2_{L^2(\Omega)}+\ell^2\|\nabla \phi\|^2_{L^2(\Omega) }\right)^{1/2} ,
\end{equation}
where $\ell$ is the characteristic length of the problem (for example, the diameter of $\Omega$)
and $\nabla$ denotes the gradient operator. Then, the weak form of Poisson's problem consists in finding $\phi\in\mathcal{U}$ such that
\begin{equation}
  \label{eq-poisson-weak}
  a_{\Omega}(\phi, \psi)
  =
  f_{\Omega}(\psi)
\end{equation}
for all $\psi \in\mathcal{U}$, where
\begin{equation}
	a_{\Omega} (\phi, \psi) := \int_{\Omega} \nabla \phi \cdot \kappa \nabla \psi \;\mathrm{d}V ,
	\qquad
	f_{\Omega}(\psi) := \int_{\Omega} f \cdot \psi \; \mathrm{d}A,
\end{equation}
are, respectively, a bilinear and a linear form on $\mathcal{U}$.

\subsection{The Poisson problem on a segment}
We describe next the stationary heat problem for a one-dimensional body, which is referred to throughout as a \emph{wire}. The governing equations of this problem can be derived from the statement~\eqref{eq-poisson} of the Poisson problem, simply by assuming that the body $\Omega$ has a prismatic shape and the temperature field is constant in all the points of a cross section. Details of this projection are omitted and the final form of the equation is given.

Consider a wire of length $L$ with temperature $\theta:[0,L]\to \mathbb{R}$. When the wire is in thermal equilibrium, the temperature must
satisfy
\begin{equation}
    \label{eq-wire-strong}
	\bar{\kappa} \dfrac{\mathrm{d}^2\theta}{\mathrm{d}x^2} = r ,
\end{equation}
where $r$ is the heat supply per unit length and $\bar{\kappa}$ is the cross-sectional conductivity. The problem, however, is not well-posed unless we append suitable boundary conditions. Rather, the temperature is only defined modulo an affine function.

As before, we are interested in the weak formulation of this problem. Postponing for the moment the issue of the uniqueness in the solution, we can introduce the function space $\mathcal{W}=H^1(0,L)$, analogous to the solution space for the body but without the trace constraint, and look for solutions $\theta\in\mathcal{W}$ such that
\begin{equation}
\label{eq-wire-weak}
 a_{L} (\theta, \eta) = f_{L}(\eta),
\end{equation}
for all $\eta\in \mathcal{W}$, where
\begin{equation}
    a_{L} (\theta, \eta) :=
    \int_0^L\bar{\kappa}\;\theta'\eta'\; \mathrm{d}x ,
    \qquad
    f_{L}(\eta) :=
    \int_0^L r \; \eta\; \mathrm{d}x .
\end{equation}
For future reference, we recall that for every function $\theta\in\mathcal{W}$, its norm is
\begin{equation}
	\| \theta\|_{\mathcal{W}} := \left(\|\theta\|^2_{L^2(0,L)}+L^2\,\|\theta'\|^2_{L^2(0,L)) }\right)^{1/2} .
\end{equation}

Let us note that the actual geometry of the wire plays no role in the equations above, except for its length. As explained in Sec.~\ref{sec-intro}, Eq.~\eqref{eq-wire-weak} merely defines the behavior of a network edge, an abstract entity that connects temperature of two points through a diffusive equation. Moreover, if the heat supply $r$ is identically zero and the diffusivity is constant, the solution to Eq.~\eqref{eq-wire-strong} is an affine function. In this case, for all practical purposes, the wire will just establish a \emph{discrete} diffusive relation between the temperature at its two end points.

\subsection{Linked formulation}\label{sec:linkedform}

We study next a joint problem consisting of a body $\Omega$ with a temperature field~$\phi$ satisfying the problem~\eqref{eq-poisson-weak}, where additionally we identify two disjoint non-empty regions $B_1,B_2\subsetneq\Omega$ whose temperatures are connected by means of a thermally conductive wire. This wire is such that the temperature at each of its ends coincides with the \emph{mean} temperature of the regions $B_1,B_2$, respectively. See \cref{Fig:sketch} for an illustration
of the linked problem. We note that, alternatively, we could have considered two disjoint conductive bodies $\Omega_1$ and $\Omega_2$ that are in thermal equilibrium while a wire connects regions $B_\alpha\subset\Omega_\alpha$, with $\alpha=1,2$. The analysis of both of the problems described is analogous and, for conciseness, we focus on the former.

\begin{figure}[htb!]
	\centering
	\includegraphics[scale=.45]{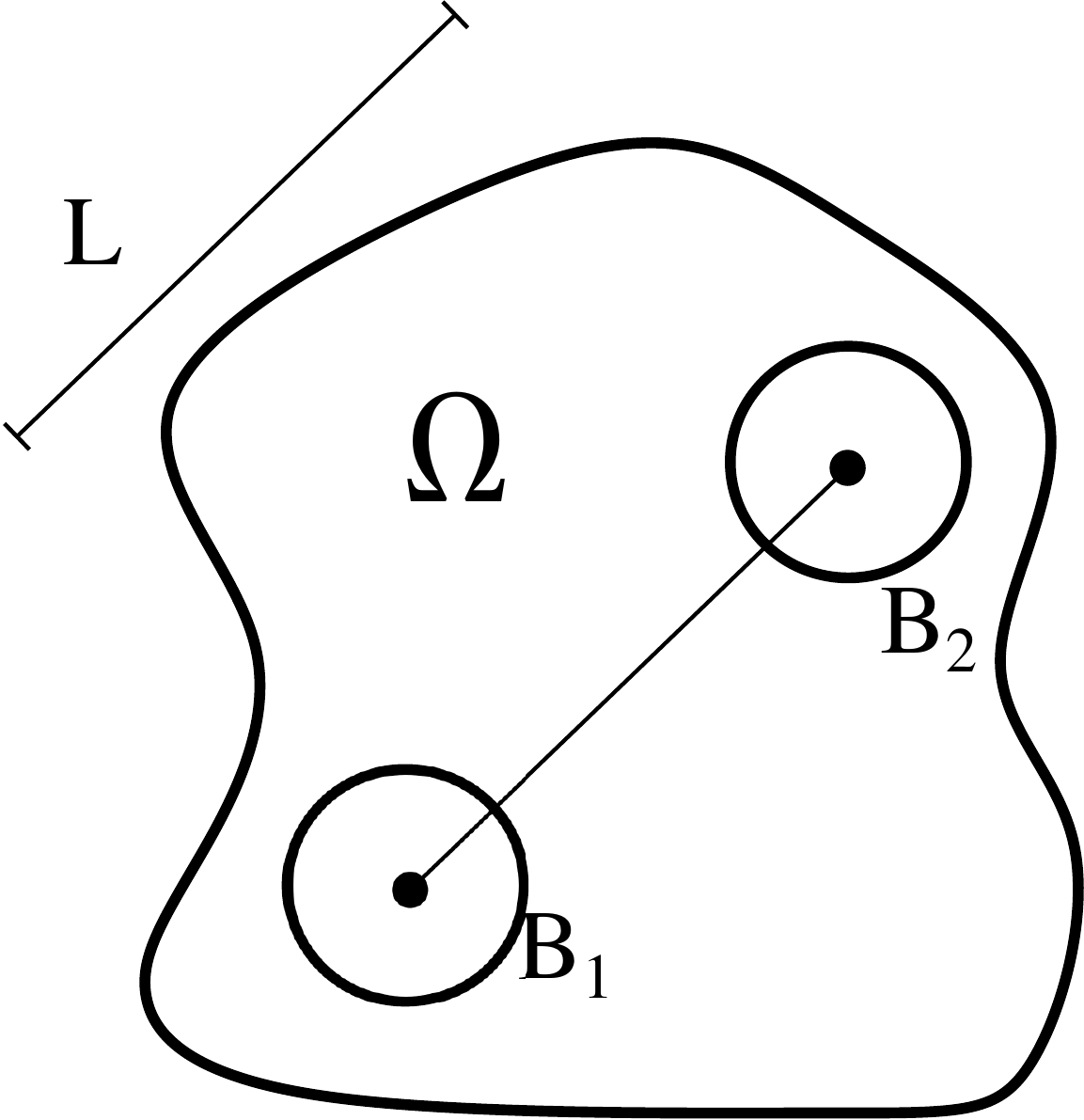}
	\caption{Sketch of linked solid-wire problem setting.}
	\label{Fig:sketch}
\end{figure}

Let us insist, once again, that the purpose of this coupled model is not the actual representation of a true wire that connects two points in the body (say, the centers of $B_1$ and $B_2$ in Fig.~\ref{Fig:sketch}). Rather, the wire connects regions $B_1$ and $B_2$ through a discrete diffusive equation. More precisely, let $\theta_1,\theta_2$ denote the mean temperature in the body
regions $B_1,B_2$, that is,
\begin{equation}
    \theta_{\alpha} := \frac{1}{\vert B_\alpha\vert}\int_{B_\alpha}\phi\; \mathrm{d}V,
    \label{eq-mean-temperatures}
\end{equation}
with $\alpha=1,2$, and $\vert B_\alpha\vert$ denoting the non-zero measure of the set. Then, the problem governing the temperature field on the wire can be fully described by the boundary value problem
\begin{subequations}
\label{eq-linked-wire}
	\begin{align}
		\bar{\kappa} \dfrac{d^2\theta}{dx^2} &=0 \qquad \text{ on } (0,L), \label{eq-linked-de}\\
		\theta(0)&=\theta_1\ , \label{eq-linked-t1}    \\
		\theta(L)&=\theta_2\ .  \label{eq-linked-t2}
	\end{align}
\end{subequations}
If the scalars $\theta_1,\theta_2$ were known, the problem~\eqref{eq-linked-wire} would be standard and its well-posedness would require no further analysis. However, here we are interested in the situation where the values $\theta_1,\theta_2$ are not known \emph{a priori} but rather, obtained through the averages~\eqref{eq-mean-temperatures} of the solution to problem~\eqref{eq-poisson-weak}.

It remains, thus, to formulate the coupled problem that includes the thermal equilibrium
of the body and wire, as well as the link conditions. We will use Lagrange multipliers $(\lambda_1,\lambda_2)=:\lambda\in Q\equiv\mathbb{R}^2$ to
enforce the two constraints~\eqref{eq-linked-t1} and~\eqref{eq-linked-t2}, and we will
use the notation $\|\cdot\|_Q$ to indicate the Euclidean norm in $\mathbb{R}^2$. For
convenience, we introduce the product space $\mathcal{V}=\mathcal{U}\times\mathcal{W}$ with norm
\begin{equation}
	\| (\phi,\theta)\|_{\mathcal{V}} :=
	\left(
	\|\phi\|^2_{\mathcal{U}} +
	\ell^{d-1}\, \|\theta\|^2_{\mathcal{W}}\right)^{\frac{1}{2}} ,
\end{equation}
for all $(\phi, \theta) \in\mathcal{V}$.
On this space we can define the bilinear form $a(\cdot,\cdot):\mathcal{V}\times\mathcal{V}\to\mathbb{R}$ and linear form $f:\mathcal{V}\to\mathbb{R}$ as
\begin{equation}
\label{eq-complete-forms}
\begin{aligned}
	a (\phi, \theta; \psi, \delta) &:= a_{\Omega}(\phi,\psi)+a_L(\theta,\delta)\ ,\\
	f(\psi, \delta) &:= f_{\Omega}(\psi) +f_L(\delta)  \in\mathcal{V} .\\
	\end{aligned}
\end{equation}
for all $(\phi,\theta)$ and $(\psi,\delta)$ in $\mathcal{V}$.

The joint equilibrium of the solid and the wire then results from the saddle point of the Lagrangian $\mathcal{L}:\mathcal{V}\times Q\to \mathbb{R}$:
\begin{equation}
	\begin{split}
	\mathcal{L}(\phi,\theta,\lambda_1,\lambda_2) :=&
\frac{1}{2}a(\phi,\theta;\phi,\theta)-f(\phi,\theta) \\
&+ \lambda_1\left(\theta(0)-\frac{1}{\vert B_1 \vert} \int_{B_1} \phi\;\mathrm{d}V \right)
+ \lambda_2\left(\theta(L)-\frac{1}{\vert B_2 \vert} \int_{B_2} \phi\;\mathrm{d}V\right).
	\end{split}
\label{eq:lagrangian}
\end{equation}
Hence, we aim to solve the following problem
\begin{equation}
(\phi,\theta,\lambda_1,\lambda_2)^{\star} = \arg \inf_{\phi,\theta} \sup_{\lambda}\mathcal{L}(\phi,\theta,\lambda_1,\lambda_2)
\label{eq:saddletotal}
\end{equation}
where the Lagrangian as in \cref{eq:lagrangian}.
The optimality conditions of the functional~$\mathcal{L}$ are satisfied by the functions
$(\phi,\theta,\lambda_1,\lambda_2)\in\mathcal{V}\times Q$ such that
	\begin{align}
		a(\phi,\theta;\psi,\delta) + b(\psi,\delta;\lambda_1,\lambda_2) &= f(\psi,\delta)\ ,\\
		b(\phi,\theta;\Gamma_1,\Gamma_2) &= 0,
		\label{eq:saddle}
	\end{align}
for all $(\psi,\delta,\Gamma_1,\Gamma_2)\in \mathcal{V}\times Q$ with
\begin{equation}
   b(\phi,\theta; \Gamma_1,\Gamma_2) :=
  \Gamma_1\left(\theta(0)-\frac{1}{\vert B_1 \vert}\int_{B_1}\phi \,\;\mathrm{d}V\right)+
   \Gamma_2\left(\theta(L)-\frac{1}{\vert B_2 \vert}\int_{B_2}\phi\,\;\mathrm{d}V\right).
\end{equation}

The bilinear form $a(\cdot,\cdot)$ defines a linear continuum operator $A:\mathcal{V}\to\mathcal{V}'$ by the relation
\begin{equation}
  \langle Au,v\rangle_{\mathcal{V}'\times\mathcal{V}}=a(u,v),
\end{equation}
for all $u=(\phi,\theta)\in\mathcal{V}, v=(\psi,\delta)\in\mathcal{V}$. Also, the bilinear form $b(\cdot,\cdot)$ on $\mathcal{V}\times Q$ defines a linear operator $B:\mathcal{V}\to Q'$ with transpose $B^T:Q\to\mathcal{V}'$ by
\begin{equation}
  \langle B v, \Gamma \rangle_{Q'\times Q} = \langle v,B^T\Gamma
  \rangle_{\mathcal{V}\times\mathcal{V}'}= b(v, \Gamma),
\end{equation}
for all $v\in\mathcal{V},\Gamma\in Q$. Employing these definitions, Eq.~\eqref{eq:saddle} can be alternatively rewritten as:
\begin{equation}
	\begin{aligned}
		 Au + B^T\lambda &= f\qquad\text{ in } \mathcal{V}'\\
		 Bu  &= 0 \qquad\text{ in }Q'.
		\label{eq:saddleA}
	\end{aligned}
\end{equation}

This last expression is the standard form of a \emph{mixed problem} \cite{brezzi1991tn}.
The solvability of this problem depends on conditions over the bilinear forms $a(\cdot,\cdot)$ and $b(\cdot,\cdot)$
as well as properties of the spaces where they are defined on. In particular,
often the properties of the linear operator $B$ are delicate to ascertain.

\begin{remarks}
Two modifications of problem~\eqref{eq:saddle} are interesting in their own right:

\begin{enumerate}
\item One could consider that the one-dimensional diffusive mechanism connects, instead
of disjoint subsets $B_1,B_2\subset\Omega$, the boundary of two different bodies, or parts of
them. The description of this modified problem and the functional setting are slightly
different than the one presented up to here, since the new problem will be formulated
in terms of the traces of functions.

\item Second, we could consider a situation where there is no connecting one-dimensional diffusive
wire between regions of the body but only that the \emph{average} temperature on a measurable set~$S\subset\Omega$
is known to have a fixed value~$\bar{\theta}$. In this simple case we will be left with Poisson's problem with a constraint. The problem will still be of the form~\eqref{eq:saddle}, more precisely,

\begin{equation}
   \label{eq-one-theta}
    \begin{aligned}
          a_\Omega(\phi,\psi) + \bar{b}(\psi,\gamma) &= f_\Omega(\psi)\ , \\
          \bar{b}(\phi,\eta) &= 0, \\
    \end{aligned}
\end{equation}
with $\bar{b}(\cdot, \cdot)$ on $\mathcal{V}\times\mathbb{R}$ defined as
\begin{equation}
   \label{eq-one-theta2}
    \bar{b}(\phi,\eta) :=  \eta\left( \bar{\theta} - \frac{1}{\vert S \vert}\int_S \phi\, \mathrm{d}V \right).
\end{equation}

\end{enumerate}
\end{remarks}

\section{Analysis}\label{Sec:Analysis}
In this section, we study the well-posedness of problem~\eqref{eq:saddle}. According to the standard
theory of mixed problems \cite{roberts1989vm,brezzi1991tn,auricchio2005tt}, we need to show that the
bilinear forms $a(\cdot,\cdot)$ and $b(\cdot,\cdot)$ are continuum, that $a(\cdot,\cdot)$ is
elliptic on the kernel of the operator $B$, and that a certain \emph{inf-sup} condition, to be defined
later, also holds for~$b(\cdot,\cdot)$.

In the following, for simplicity, let us assume that the conductivities $\kappa$ and $\bar{\kappa}$ are constant and
positive. The first step of the analysis is to study the continuity of all the linear and bilinear
forms appearing in the problem statement~\eqref{eq:saddle}. This is trivial and we summarize all the
results, without proof, in the following theorem:
\begin{theorem}
	The bilinear forms $a_{\Omega}(\cdot,\cdot)$, $a_L(\cdot,\cdot)$, and $a(\cdot,\cdot)$
	are continuous in their corresponding spaces of definition, i.e.,
	\begin{equation}
	  \begin{aligned}
	  \vert a_\Omega(\phi,\psi) \vert
	  &\le c_{\Omega} \; \|\phi\|_{\mathcal{U}} \; \|\psi\|_{\mathcal{U}} \ ,
	  \\
	  \vert a_L(\theta,\delta)\vert
	  &\le c_{L} \; \|\theta\|_{\mathcal{W}} \; \|\delta\|_{\mathcal{W}}   \ ,
	 \\
	  \vert a(\phi,\theta;\psi,\delta) \vert
	  &\le c\;
	  \|(\psi,\delta)\|_{\mathcal{V}} \ ,
	  \end{aligned}
	\end{equation}
for all $\phi, \psi\in\mathcal{U}$, $\theta, \delta\in\mathcal{W}$ and some generic positive
constants $c_{\Omega}, c_L$ and $c$.

\label{Theor:aomega_cont}
\end{theorem}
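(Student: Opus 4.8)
The plan is to reduce each of the three estimates to the Cauchy--Schwarz inequality, treating the bilinear forms one at a time and keeping careful track of the characteristic-length weights so that the continuity constants come out explicitly. Since the forms are all $L^2$ pairings of gradients against a constant positive conductivity, continuity is immediate and the only content lies in expressing the bounds in terms of the weighted norms $\|\cdot\|_{\mathcal{U}}$, $\|\cdot\|_{\mathcal{W}}$ and $\|\cdot\|_{\mathcal{V}}$.

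First I would bound $a_\Omega$. Since $\kappa$ is a positive constant, Cauchy--Schwarz on $L^2(\Omega)^d$ gives $|a_\Omega(\phi,\psi)| \le \kappa\,\|\nabla\phi\|_{L^2(\Omega)}\,\|\nabla\psi\|_{L^2(\Omega)}$. The definition of $\|\cdot\|_{\mathcal{U}}$ in \eqref{eq-spaceU} furnishes the elementary bound $\ell\,\|\nabla\phi\|_{L^2(\Omega)} \le \|\phi\|_{\mathcal{U}}$, so substituting yields $|a_\Omega(\phi,\psi)| \le (\kappa/\ell^2)\,\|\phi\|_{\mathcal{U}}\|\psi\|_{\mathcal{U}}$, that is, $c_\Omega = \kappa/\ell^2$. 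The estimate for $a_L$ has an identical structure: Cauchy--Schwarz on $L^2(0,L)$ gives $|a_L(\theta,\delta)| \le \bar{\kappa}\,\|\theta'\|_{L^2(0,L)}\|\delta'\|_{L^2(0,L)}$, and the bound $L\,\|\theta'\|_{L^2(0,L)} \le \|\theta\|_{\mathcal{W}}$ built into the definition of $\|\cdot\|_{\mathcal{W}}$ produces $c_L = \bar{\kappa}/L^2$.

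For the combined form I would start from the additive structure $a(\phi,\theta;\psi,\delta) = a_\Omega(\phi,\psi) + a_L(\theta,\delta)$, insert the two previous bounds to get $|a(\phi,\theta;\psi,\delta)| \le c_\Omega\|\phi\|_{\mathcal{U}}\|\psi\|_{\mathcal{U}} + c_L\|\theta\|_{\mathcal{W}}\|\delta\|_{\mathcal{W}}$, and then apply the discrete Cauchy--Schwarz inequality in $\mathbb{R}^2$ to recombine the right-hand side into the product norm $\|(\phi,\theta)\|_{\mathcal{V}}\|(\psi,\delta)\|_{\mathcal{V}}$. The weight $\ell^{d-1}$ appearing in $\|\cdot\|_{\mathcal{V}}$ is absorbed by pairing $\ell^{(d-1)/2}\|\theta\|_{\mathcal{W}}$ with $\ell^{(d-1)/2}\|\delta\|_{\mathcal{W}}$, so the $a_L$ term carries the factor $c_L/\ell^{d-1}$; the two-term estimate then gives the clean constant $c = \max\{c_\Omega,\ c_L/\ell^{d-1}\}$ with no further loss.

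There is no genuine obstacle here, as the author already remarks that the result is trivial: everything follows from the positivity and boundedness of $\kappa,\bar{\kappa}$ together with Cauchy--Schwarz. The only point demanding any care is the bookkeeping of the length-scale weights $\ell$, $L$ and $\ell^{d-1}$, which must be distributed consistently so that the three continuity constants are dimensionally coherent and the final step assembles the product-space norm exactly rather than merely up to an equivalence constant.
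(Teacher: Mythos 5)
Your proof is correct and is exactly the standard Cauchy--Schwarz argument the paper has in mind: the authors state this result ``without proof'' as trivial, and your bookkeeping of the weights $\ell$, $L$ and $\ell^{d-1}$ supplies the omitted details cleanly, including the explicit constants $c_\Omega=\kappa/\ell^2$, $c_L=\bar\kappa/L^2$ and $c=\max\{c_\Omega,\,c_L/\ell^{d-1}\}$. Note only that the third displayed inequality in the theorem as printed is missing the factor $\|(\phi,\theta)\|_{\mathcal{V}}$ on the right-hand side; what you prove, $\vert a(\phi,\theta;\psi,\delta)\vert\le c\,\|(\phi,\theta)\|_{\mathcal{V}}\,\|(\psi,\delta)\|_{\mathcal{V}}$, is the intended statement.
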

Next, we show the continuity of $b(\cdot,\cdot)$. Since this bilinear form is non-standard, we provide the full proof of the result.
\begin{theorem}
 	The bilinear form $b$ is continuous on $(\mathcal{V}\times Q)$, i.e.
 	\begin{equation}
 		\vert b(\phi,\theta;\lambda) \vert \le c \; \|(\phi,\theta)\|_{\mathcal{V}} \; \|\lambda\|_{Q}
    \end{equation}
 	for all $(\phi,\theta; \lambda) \in \mathcal{V}\times Q$ and some constant $c>0$.
 	\label{Theor:al_cont}
\end{theorem}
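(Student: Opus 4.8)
The plan is to exploit the product structure of $b$: writing $\lambda=(\Gamma_1,\Gamma_2)$, the form splits as a sum over $\alpha=1,2$ of terms $\Gamma_\alpha\bigl(\theta(x_\alpha)-|B_\alpha|^{-1}\int_{B_\alpha}\phi\,\mathrm{d}V\bigr)$, with $x_1=0$ and $x_2=L$. I would bound each of the two inner factors (the endpoint evaluation and the regional average) separately in terms of $\|\theta\|_{\mathcal{W}}$ and $\|\phi\|_{\mathcal{U}}$, and then recombine the resulting estimate using the Cauchy--Schwarz inequality in $\mathbb{R}^2$ to extract the factor $\|\lambda\|_Q=(\Gamma_1^2+\Gamma_2^2)^{1/2}$.

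The regional averages are the easy half. A direct application of Cauchy--Schwarz on $B_\alpha$ gives
\begin{equation}
  \left|\frac{1}{|B_\alpha|}\int_{B_\alpha}\phi\,\mathrm{d}V\right|
  \le \frac{1}{|B_\alpha|}\,|B_\alpha|^{1/2}\,\|\phi\|_{L^2(B_\alpha)}
  \le |B_\alpha|^{-1/2}\,\|\phi\|_{L^2(\Omega)}
  \le |B_\alpha|^{-1/2}\,\|\phi\|_{\mathcal{U}},
\end{equation}
using $\|\phi\|_{L^2(\Omega)}\le\|\phi\|_{\mathcal{U}}$ from the definition~\eqref{eq-spaceU} of the $\mathcal{U}$-norm. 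Since $B_1,B_2$ have fixed nonzero measure, the constants $|B_\alpha|^{-1/2}$ are finite and enter the final continuity constant.

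The crux, and the step I expect to be the main obstacle, is controlling the endpoint evaluations $\theta(0)$ and $\theta(L)$, since these are not a priori meaningful for a generic $L^2$ function and are precisely what makes $b$ non-standard. Here I would invoke the one-dimensional Sobolev embedding $\mathcal{W}=H^1(0,L)\hookrightarrow C([0,L])$, which guarantees that point evaluation is a bounded linear functional. Concretely, for $\theta\in\mathcal{W}$ and any $x,y\in[0,L]$ the fundamental theorem of calculus gives $\theta(x)=\theta(y)+\int_y^x\theta'(s)\,\mathrm{d}s$; averaging over $y$ and applying Cauchy--Schwarz yields an estimate of the form $|\theta(x)|\le C_{\mathrm{emb}}\,\|\theta\|_{\mathcal{W}}$, with a constant $C_{\mathrm{emb}}$ depending on $L$. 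The delicate point is tracking this constant consistently with the scaled norm $\|\theta\|_{\mathcal{W}}^2=\|\theta\|_{L^2(0,L)}^2+L^2\|\theta'\|_{L^2(0,L)}^2$, so that the dimensional factors of $L$ balance and the bound is uniform.

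To conclude, I would combine the two factor estimates into
\begin{equation}
  \left|\theta(x_\alpha)-\frac{1}{|B_\alpha|}\int_{B_\alpha}\phi\,\mathrm{d}V\right|
  \le C_{\mathrm{emb}}\,\|\theta\|_{\mathcal{W}}+|B_\alpha|^{-1/2}\,\|\phi\|_{\mathcal{U}},
\end{equation}
sum over $\alpha$, and apply Cauchy--Schwarz in $\mathbb{R}^2$ to factor out $\|\lambda\|_Q$. The remaining sum $\|\phi\|_{\mathcal{U}}+\|\theta\|_{\mathcal{W}}$ is then controlled by $\|(\phi,\theta)\|_{\mathcal{V}}$ through the definition of the product norm, absorbing the weight $\ell^{d-1}$ into the constant. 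This produces the desired bound $|b(\phi,\theta;\lambda)|\le c\,\|(\phi,\theta)\|_{\mathcal{V}}\,\|\lambda\|_Q$, completing the proof.
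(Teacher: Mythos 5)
Your proposal is correct and follows essentially the same route as the paper: the paper likewise bounds the endpoint evaluations $\vert\theta(0)\vert,\vert\theta(L)\vert$ by $L^{-1/2}\|\theta\|_{H^1(0,L)}$ via the fundamental theorem of calculus anchored at a mean-value point (your averaging over the base point $y$ is an equivalent device), controls the regional averages by Cauchy--Schwarz, and then extracts $\|\lambda\|_Q$ to assemble the continuity bound. The only cosmetic difference is that the paper routes the average bound through $\frac{1}{\vert\Omega\vert}\int_\Omega\vert\phi\vert\,\mathrm{d}V$ with constants absorbing the measure ratios, whereas you apply Cauchy--Schwarz directly on $B_\alpha$; both yield the same estimate.
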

\begin{proof}
Since $H^1(0,L)\hookrightarrow C^0[0,L]$, we can use the mean value theorem to determine
that there exists $m\in[0,L]$ such that
\begin{equation}
    \theta(m) = \frac{1}{L} \int_0^L \theta(x) \dx\ .
\end{equation}
Then, by the fundamental theorem of calculus
\begin{equation}
    \theta(L) = \theta(m) + \int_m^L \theta'(x) \dx .
\end{equation}
Hence,
\begin{equation}
\begin{aligned}
    \vert \theta(L) \vert &\le
    \frac{1}{L} \int_0^L \vert \theta(x)\vert \dx
    +
    \int_m^L \vert \theta'(x)\vert \dx
    \\
    &\le
    L^{-1/2}\, \| \theta \|_{L^2(0,L)} +
    {L}^{1/2}\, \| \theta' \|_{L^2(0,L)}
    \\
    &=
    {L}^{-1/2}\, \| \theta \|_{H^1(0,L)} .
\end{aligned}
\end{equation}
Similarly, the bound $\vert\theta(0)\vert \le {L}^{-1/2}\, \| \theta \|_{H^1(0,L)}$ also holds.
Using these results, we proceed to bound from above the bilinear form~$b(\cdot,\cdot)$:
\begin{align*}
    \vert b(\phi,\theta;\lambda)\vert
    &=
    	\left\vert\lambda_1\left( \frac{1}{\vert B_1\vert} \int_{B_1}\phi \dV -
    	\theta(0)\right)+\lambda_2\left(\frac{1}{\vert B_2 \vert}\int_{B_2}\phi \dV - \theta(L)\right) \right\vert
    	\\
    	&\leq \vert\lambda_1\vert \left\vert  \frac{1}{\vert B_1\vert}  \int_{B_1}\phi \dV-\theta(0)\right\vert +
    	\vert\lambda_2\vert \left \vert \frac{1}{\vert B_2 \vert} \int_{B_2}\phi \dV - \theta(L)\right\vert
    	\\
    	&\leq \vert \lambda_1\vert
    	\left(\frac{1}{\vert B_1 \vert} \int_{B_1} \vert\phi\vert \dV +\vert\theta(0)\vert\right)
    	+
    	\vert \lambda_2\vert  \left( \frac{1}{\vert B_2 \vert} \int_{B_2}\vert\phi\vert \dV+\vert\theta(L)\vert\right)
    	\\
    	& \leq
    	C\, \|\lambda\|_2
    	\left(
    	\frac{1}{\vert \Omega \vert} \int_\Omega \vert \phi \vert \dV +
    	L^{-1/2}\, \|\theta\|_{H^1(0,L)}
    	\right)
    	\\
    	&\leq
    	C\, \|\lambda\|_2
    	\left( \ell^{-d/2} \|\phi\|_{H_0^1(\Omega)} + L^{-1/2} \|\theta\|_{H^1(0,L)} \right)
    	\\
    	&\leq
    	C\;\ell^{-d/2}
    	\|\lambda\|_Q\; \|(\phi,\theta)\|_{\mathcal{V}},
\end{align*}
where, throughout the proof, $C$ denotes a constant whose value might change from
one step to another and $d$ denotes the dimension of the characteristic length corresponding to $\Omega$.
\end{proof}
Next, we have to ensure ellipticity on $\ker B$, the kernel of the operator~$B$.
By definition, this set is
\begin{equation}
  \ker B := \{(\phi,\theta)\in\mathcal{V}: \;b(\phi,\theta;\lambda)=0, \quad \forall \lambda \in Q\} .
\end{equation}
Elements $(\phi,\theta)\in\mathcal{V}$ in $\ker B$ verify
\begin{equation}
    \lambda_1\left(\dfrac{1}{\vert B_1\vert}\int_{B_1}\phi \;\mathrm{d}V-\theta(0)\right) +
    \lambda_2\left(\dfrac{1}{\vert B_2 \vert}\int_{B_2}\phi \;\mathrm{d}V -\theta(L)\right)
    =0.
\end{equation}
for all pairs $(\lambda_1,\lambda_2)\in Q$.
Since the two Lagrange multipliers are independent, it must hold that
\begin{equation}
    \ker B =\left\{
     (\phi,\theta)\in\mathcal{V}: \dfrac{1}{\vert B_1\vert}\int_{B_1}\phi \;\mathrm{d}V=\theta(0)\ \text{ and }\
     \dfrac{1}{\vert B_2 \vert}\int_{B_2}\phi \;\mathrm{d}V=\theta(L)
    \right\}.
\end{equation}
As advanced above, for the well-posedness of the mixed problem, it suffices that
the bilinear form $a(\cdot,\cdot)$ be elliptic on $\ker B\subset \mathcal{V}$, as shown
in the following theorem:
\begin{theorem}
	The bilinear form $a(\cdot,\cdot)$ is elliptic on $\ker B$, i.e., there exists
	a constant $\bar{\alpha}>0$ such that
	\begin{equation}
	  a(\phi,\theta; \phi,\theta)\ge \bar{\alpha}\; \|(\phi,\theta)\|^2_{\mathcal{V}} ,
	\end{equation}
	for all $(\phi,\theta)\in \ker B$.
\label{Theor:ellipt}
\end{theorem}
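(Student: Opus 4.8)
The plan is to exploit the fact that, although
$a(\phi,\theta;\phi,\theta)=\kappa\|\nabla\phi\|^2_{L^2(\Omega)}+\bar{\kappa}\|\theta'\|^2_{L^2(0,L)}$
only controls the two semi-norms $\|\nabla\phi\|_{L^2(\Omega)}$ and $\|\theta'\|_{L^2(0,L)}$,
these semi-norms suffice to recover the full $\mathcal{V}$-norm once we restrict to $\ker B$.
The only genuine danger is the constant part of $\theta$: since $\mathcal{W}=H^1(0,L)$ carries no
boundary condition, a constant $\theta\equiv c$ satisfies $a_L(\theta,\theta)=0$ while
$\|\theta\|_{\mathcal{W}}\neq0$, so $a(\cdot,\cdot)$ is certainly \emph{not} elliptic on all of
$\mathcal{V}$. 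The kernel constraint is exactly what rules this out, by tying $\theta(0)$ and
$\theta(L)$ to averages of $\phi$, a field whose full norm we \emph{can} control thanks to its
vanishing trace.

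First I would treat the continuum field: because $\phi\in H^1_0(\Omega)$, the Poincar\'e
inequality gives $\|\phi\|_{L^2(\Omega)}\le C_P\|\nabla\phi\|_{L^2(\Omega)}$, whence
$\|\phi\|^2_{\mathcal{U}}\le(C_P^2+\ell^2)\|\nabla\phi\|^2_{L^2(\Omega)}$; in other words the
gradient semi-norm already bounds $\|\phi\|^2_{\mathcal{U}}$ from below up to a constant. Next I
would control the wire temperature. Writing $\theta(x)=\theta(0)+\int_0^x\theta'(s)\,\mathrm{d}s$
and applying Cauchy--Schwarz exactly as in the proof of \Cref{Theor:al_cont} yields the pointwise
bound $\vert\theta(x)\vert\le\vert\theta(0)\vert+L^{1/2}\|\theta'\|_{L^2(0,L)}$; squaring and
integrating over $(0,L)$ gives
$\|\theta\|^2_{L^2(0,L)}\lesssim L\,\vert\theta(0)\vert^2+L^2\|\theta'\|^2_{L^2(0,L)}$. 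The crucial
step is to invoke membership in $\ker B$: there $\theta(0)=\vert B_1\vert^{-1}\int_{B_1}\phi\,\mathrm{d}V$,
so Cauchy--Schwarz gives $\vert\theta(0)\vert^2\le\vert B_1\vert^{-1}\|\phi\|^2_{L^2(\Omega)}
\le\vert B_1\vert^{-1}\|\phi\|^2_{\mathcal{U}}$. Combining the last two estimates bounds
$\|\theta\|^2_{\mathcal{W}}$ by a constant times $\|\phi\|^2_{\mathcal{U}}+L^2\|\theta'\|^2_{L^2(0,L)}$.

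Finally I would assemble the pieces. Substituting the Poincar\'e estimate for
$\|\phi\|^2_{\mathcal{U}}$ into the wire bound, the full norm
$\|(\phi,\theta)\|^2_{\mathcal{V}}=\|\phi\|^2_{\mathcal{U}}+\ell^{d-1}\|\theta\|^2_{\mathcal{W}}$ is
dominated by $C_1\|\nabla\phi\|^2_{L^2(\Omega)}+C_2\|\theta'\|^2_{L^2(0,L)}$ for explicit constants
$C_1,C_2$ depending on $C_P,\ell,L,\vert B_1\vert,d$. Since the right-hand side is in turn bounded
by $\max(C_1/\kappa,\,C_2/\bar{\kappa})\,a(\phi,\theta;\phi,\theta)$, inverting the chain yields the
claim with $\bar{\alpha}=\min(\kappa/C_1,\,\bar{\kappa}/C_2)>0$. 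I expect the conceptual heart, and
the only real obstacle, to be the recognition that the constant mode of $\theta$ cannot be
controlled intrinsically and must be borrowed from $\phi$ through the kernel relation; once that
link is used, the argument collapses to Poincar\'e's inequality together with the one-dimensional
estimates already established in \Cref{Theor:al_cont}. A minor but necessary care is the bookkeeping
of the characteristic-length powers ($\ell^{d-1}$, $\ell^2$, $L$), so that the resulting
$\bar{\alpha}$ is strictly positive and dimensionally consistent.
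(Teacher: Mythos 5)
Your proof is correct, and it reaches the result by a noticeably more direct route than the paper's. Both arguments share the same conceptual core that you correctly identify as the heart of the matter: $a_L$ cannot control the constant mode of $\theta$, so that mode must be borrowed from $\phi$ through the kernel relation $\theta(0)=\vert B_1\vert^{-1}\int_{B_1}\phi\,\mathrm{d}V$, after which Poincar\'e's inequality on $H^1_0(\Omega)$ closes the loop. Where you differ is in the implementation. You prove the chain of \emph{upper} bounds $\|(\phi,\theta)\|^2_{\mathcal{V}}\le C_1\|\nabla\phi\|^2_{L^2(\Omega)}+C_2\|\theta'\|^2_{L^2(0,L)}$ directly, via the pointwise estimate $\vert\theta(x)\vert\le\vert\theta(0)\vert+L^{1/2}\|\theta'\|_{L^2(0,L)}$ and Cauchy--Schwarz on the averaging constraint, using only the endpoint $x=0$; the coercivity constant then falls out as $\bar{\alpha}=\min(\kappa/C_1,\bar{\kappa}/C_2)$ with fully explicit $C_1,C_2$. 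The paper instead works with \emph{lower} bounds on $\|\theta-\theta(0)\|^2_{L^2(0,L)}+\|\theta-\theta(L)\|^2_{L^2(0,L)}$, expanded via a weighted Young inequality with two free parameters $\epsilon_0,\epsilon_L$ that must be balanced at the end (the choice $\epsilon_0^2=\epsilon_L^2=4/5$), uses both endpoints and both linked regions, and absorbs the resulting negative $\|\phi\|^2$ contributions into the Poincar\'e surplus. Your version buys simplicity and transparency of the constants at no loss of generality; the paper's version is symmetric in the two constraints but requires the parameter bookkeeping you avoid. The only cosmetic remark is that you never need the second constraint $\theta(L)=\vert B_2\vert^{-1}\int_{B_2}\phi\,\mathrm{d}V$ at all --- one anchoring of the wire suffices --- which is a legitimate simplification, not a gap.
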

\begin{proof}
To start the proof, we need two preliminary results. First, using the weighted
Young inequality we note that
\begin{equation}
\label{eq-proof20}
   \begin{aligned}
    \|\theta - &\theta(0)\|^2_{L^2(0,L)} +
    \|\theta - \theta(L)\|^2_{L^2(0,L)}
    \\
    =&
    2\,\|\theta\|^2_{L^2(0,L)} + L\,\vert \theta(0) \vert^2 + L\,\vert \theta(L) \vert^2 -
    2\int_0^L \theta(0)\, \theta \dx - 2\int_0^L \theta(L)\, \theta \dx
    \\
    \ge&
    2\,\|\theta\|^2_{L^2(0,L)} + L\,\vert \theta(0) \vert^2 + L\,\vert \theta(L) \vert^2
    \\
    &
    - \epsilon_0^2\, \|\theta \|^2_{L^2(0,L)}
    - \frac{L}{\epsilon_0^2} \vert \theta(0) \vert^2
    - \epsilon_L^2\, \|\theta \|^2_{L^2(0,L)}
    - \frac{L}{\epsilon_L^2} \vert \theta(L) \vert^2
    \\
    \ge&
    \left( 2 - \epsilon_0^2 - \epsilon_L^2 \right) \|\theta\|^2_{L^2(0,L)}
    + L\left( 1 - \frac{1}{\epsilon_0^2} \right) \vert \theta(0) \vert^2
    + L\left( 1 - \frac{1}{\epsilon_L^2} \right) \vert \theta(L) \vert^2 ,
   \end{aligned}
\end{equation}
for arbitrary scalars $\epsilon_0,\epsilon_L>0$. In the kernel of $B$, moreover, we have
that
\begin{equation}
\label{eq-proof21}
    \begin{aligned}
    \vert \theta(0) \vert^2 &=
    \frac{1}{\vert B_1\vert} \left\vert \int_{B_1} \phi \dV \right\vert^2
    \le
    C\, \ell^{-d} \|\phi\|^2_{H^1_0(\Omega)} ,
    \\
    \vert \theta(L) \vert^2 &=
    \frac{1}{\vert B_2\vert} \left\vert \int_{B_2} \phi \dV \right\vert^2
    \le
    C\, \ell^{-d} \|\phi\|^2_{H^1_0(\Omega)}\ .
    \end{aligned}
\end{equation}
Assuming that the following conditions hold
\begin{equation}
    \label{eq-epsilon-C1}
    1 - \frac{1}{\epsilon_0^2} \le 0 \ ,\qquad
    1 - \frac{1}{\epsilon_L^2} \le 0 \ ,\qquad
\end{equation}
then, combining \cref{eq-proof20,,eq-proof21}, the following bound is
obtained
\begin{equation}
\label{eq-proof22}
   \begin{aligned}
    \|\theta - &\theta(0)\|^2_{L^2(0,L)} +
    \|\theta - \theta(L)\|^2_{L^2(0,L)}
    \\
    \ge&
    \left( 2 - \epsilon_0^2 - \epsilon_L^2 \right) \|\theta\|^2_{L^2(0,L)}
    + C\, \ell^{1-d} \left( 2 - \frac{1}{\epsilon_0^2} - \frac{1}{\epsilon_L^2}\right)
    \|\phi\|^2_{H^1_0(\Omega)}\ .
   \end{aligned}
\end{equation}
Also, as in the proof of \cref{Theor:al_cont}, we use the fundamental theorem
of calculus to bound
\begin{equation}
    \label{eq-proof23}
    \begin{aligned}
    \| \theta' \|^2_{L^2(0,L)} &\ge C L^{-2} \| \theta - \theta(0)\|^2_{L^2(0,L)}\ , \\
    \| \theta' \|^2_{L^2(0,L)} &\ge C L^{-2} \| \theta - \theta(L)\|^2_{L^2(0,L)}\ . \\
    \end{aligned}
\end{equation}
To finally prove the ellipticity of the bilinear form $a(\cdot,\cdot)$ we
use the definition~\eqref{eq-complete-forms}, the bounds~\eqref{eq-proof22}, \eqref{eq-proof23} and the Poincar\'e inequality from where it follows that
\begin{equation*}
	\begin{aligned}
		a(\phi,\theta;\phi,\theta) =&
		\int_{\Omega} \kappa\;\vert\nabla\phi\vert^2 \dV + \int_0^L\bar{\kappa}\;\vert\theta'\vert^2\dx
		\\
		\geq&
		C\,\kappa\,\ell^{-2} \|\phi\|^2_{H^1_0(\Omega)}
		+ \frac{\bar{\kappa}}{2} \| \theta' \|^2_{L^2(0,L)}
		+ \frac{\bar{\kappa}}{2} \| \theta' \|^2_{L^2(0,L)}
		\\
		\geq&
		C\,\kappa\,\ell^{-2} \|\phi\|^2_{H^1_0(\Omega)}
		+ C\, \kappa\, \ell^{d-1} \| \theta' \|^2_{L^2(0,L)}
		\\
		&+ C\,\kappa\,\ell^{d-3}
		\left(
		\| \theta - \theta(0)\|^2_{L^2(0,L)} + \| \theta - \theta(L)\|^2_{L^2(0,L)}
		\right)
		\\
		\geq&
		C\,\kappa\,\ell^{-2} \|\phi\|^2_{H^1_0(\Omega)}
		+ C\, \kappa\, \ell^{d-1} \| \theta' \|^2_{L^2(0,L)}
		\\
		&+ C\,\kappa\,\ell^{d-3}
		\left( 2 - \epsilon_0^2 - \epsilon_L^2 \right) \|\theta\|^2_{L^2(0,L)}
		\\
		&+
		C\, \kappa\,\ell^{-2}
		\left( 2 - \frac{1}{\epsilon_0^2} - \frac{1}{\epsilon_L^2}\right)
        \|\phi\|^2_{H^1_0(\Omega)} ,
\end{aligned}
\end{equation*}
where, as usual, $C$ denotes a generic constant whose value might change
in each inequality. Finally, we rewrite this bound as
\begin{equation*}
    \begin{aligned}
		a(\phi,\theta;\phi,\theta) \ge&
		C\,\kappa\,\ell^{-2}
		\left( 3 - \frac{1}{\epsilon_0^2} - \frac{1}{\epsilon_L^2}\right)
        \|\phi\|^2_{H^1_0(\Omega)}
        \\
        &+
        C\,\kappa\,\ell^{d-3}
        \left(
        (2-\epsilon_0^2-\epsilon_L^2)\|\theta\|^2_{L^2(0,L)}
        + L^2 \|\theta'\|^2_{L^2(0,L)}
        \right)
		\\
		\ge&
		C\,\kappa\,\ell^{-2}
        \left(
        \|\phi\|^2_{\mathcal{U}} + \ell^{d-1} \|\theta\|^2_{\mathcal{W}}
        \right)
        \\
        =&
		C\,\kappa\,\ell^{-2}
        \|(\phi,\theta)\|^2_{\mathcal{V}}\ ,
	\end{aligned}
\end{equation*}
as long as there exist $\epsilon_0^2,\epsilon_L^2 \ge 0$ that verify
conditions~\eqref{eq-epsilon-C1} as well as
\begin{equation*}
    3 - \frac{1}{\epsilon_0^2} - \frac{1}{\epsilon_L^2} > 0
    \ , \qquad
    2 - \epsilon_0^2 - \epsilon_L^2 > 0\ .
\end{equation*}
These three conditions are satisfied, for example, by $\epsilon_0^2=\epsilon_L^2=4/5$,
completing the proof.
\end{proof}

Let us note that the bilinear form $a(\cdot,\cdot)$ is not elliptic in the whole
space $\mathcal{V}$, because the bilinear form of the one-dimensional conductor,
namely $a_L(\cdot,\cdot)$ is not elliptic in the space $\mathcal{W}$, precisely
due to the lack of Dirichlet boundary conditions in problem~\eqref{eq-wire-strong}.

Finally, to ensure that the mixed problem is well-posed, we also have to show that
the following \emph{inf-sup} condition holds for $b(\cdot,\cdot)$.
\begin{theorem}\label{Theor:infsup}
Inf-sup condition. There exists a constant $\beta>0$ such that
\begin{equation}
  \sup_{(\phi,\theta)\in \mathcal{V}} \frac{b(\phi,\theta;\lambda)}{\|(\phi,\theta)\|_\mathcal{V}}
  \geq \beta
  \|\lambda\|_{Q \backslash\ker B^T}.
\end{equation}
\end{theorem}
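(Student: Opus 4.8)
The plan is to establish the inf-sup condition by an explicit construction of a test pair, exploiting the fact that the constraint form $b$ couples the multiplier $\lambda$ to the \emph{endpoint values} $\theta(0),\theta(L)$ of the wire temperature, and that these may be prescribed freely in $\mathcal{W}=H^1(0,L)$. This decoupling is what makes the constraint operator surjective with a clean stability constant.

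First I would identify $\ker B^T$. Taking $\phi=0$ gives $b(0,\theta;\lambda)=\lambda_1\theta(0)+\lambda_2\theta(L)$, and since one can choose $\theta\in\mathcal{W}$ with arbitrary boundary values, the condition $b(v;\lambda)=0$ for all $v\in\mathcal{V}$ forces $\lambda_1=\lambda_2=0$. Hence $\ker B^T=\{0\}$, the quotient norm $\|\lambda\|_{Q\backslash\ker B^T}$ coincides with the Euclidean norm $\|\lambda\|_Q$, and it suffices to bound the supremum below by $\beta\,\|\lambda\|_Q$.

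Second, given $\lambda=(\lambda_1,\lambda_2)\in Q$, I would set $\phi=0$ and let $\theta$ be the affine interpolant $\theta(x)=\lambda_1+(\lambda_2-\lambda_1)\,x/L$, so that $\theta(0)=\lambda_1$ and $\theta(L)=\lambda_2$. With this choice $b(0,\theta;\lambda)=\lambda_1^2+\lambda_2^2=\|\lambda\|_Q^2$. A direct computation of the $\mathcal{W}$-norm of an affine function gives the scalings $\|\theta\|_{L^2(0,L)}^2\le C\,L\,\|\lambda\|_Q^2$ and $\|\theta'\|_{L^2(0,L)}^2\le C\,L^{-1}\|\lambda\|_Q^2$, whence $\|\theta\|_{\mathcal{W}}^2\le C\,L\,\|\lambda\|_Q^2$; since $\phi=0$ this yields $\|(0,\theta)\|_{\mathcal{V}}^2=\ell^{d-1}\|\theta\|_{\mathcal{W}}^2\le C\,\ell^{d-1}L\,\|\lambda\|_Q^2$. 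Bounding the supremum from below by the value at this particular admissible pair then gives
\[
  \sup_{(\phi,\theta)\in\mathcal{V}}\frac{b(\phi,\theta;\lambda)}{\|(\phi,\theta)\|_{\mathcal{V}}}
  \;\ge\;
  \frac{\|\lambda\|_Q^2}{C\,(\ell^{d-1}L)^{1/2}\,\|\lambda\|_Q}
  \;=\;
  \beta\,\|\lambda\|_Q,
  \qquad \beta:=C^{-1}(\ell^{d-1}L)^{-1/2}>0,
\]
which is the assertion.

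I expect the only delicate point to be the bookkeeping of the characteristic lengths $\ell$ and $L$, so that $\beta$ emerges strictly positive and independent of $\lambda$. The construction itself is elementary precisely because the wire endpoints decouple from the bulk averages, allowing the entire constraint to be discharged through $\theta$ alone with $\phi=0$. An alternative route would realize $\lambda$ via the averages of $\phi$ over $B_1,B_2$, but that is more delicate because of the averaging operators, so the wire construction is preferable.
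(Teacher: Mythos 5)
Your proof is correct, but it takes a genuinely different route from the paper's. The paper disposes of the inf-sup condition in one line: $B$ maps $\mathcal{V}$ into $Q'\cong\mathbb{R}^2$, a finite-dimensional space, so its range is automatically closed, and by the standard equivalence (Theorem~1.1 in Brezzi--Fortin) the inf-sup inequality with the quotient norm $\|\lambda\|_{Q\backslash\ker B^T}$ follows. You instead argue constructively: you first show $\ker B^T=\{0\}$ (so the quotient norm is just the Euclidean norm), then exhibit the explicit test pair $\phi=0$, $\theta$ affine with $\theta(0)=\lambda_1$, $\theta(L)=\lambda_2$, which yields $b(0,\theta;\lambda)=\|\lambda\|_Q^2$ while $\|(0,\theta)\|_{\mathcal{V}}\le C\,(\ell^{d-1}L)^{1/2}\|\lambda\|_Q$; your norm bookkeeping for the affine interpolant is consistent with the definitions of $\|\cdot\|_{\mathcal{W}}$ and $\|\cdot\|_{\mathcal{V}}$ in the paper, and the sign convention in $b$ is handled correctly. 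What your approach buys is an explicit, quantitative constant $\beta\sim(\ell^{d-1}L)^{-1/2}$ and the identification $\ker B^T=\{0\}$ (which the paper only invokes later, in the discrete analysis of Section~4); what the paper's approach buys is brevity and immunity to any changes in the specific form of $b$, since it uses nothing beyond $\dim Q<\infty$. Both are valid; yours is the more informative of the two.
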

\begin{proof}
Since $B$  is an operator from $\mathcal{V}$ to $Q'$, a finite-dimensional space, its range is closed (Theorem 1.1. in \cite{brezzi1991tn}). Hence, the inf-sup condition holds.
\end{proof}
With \cref{Theor:aomega_cont,,Theor:al_cont,,Theor:ellipt,,Theor:infsup}, it follows that problem (\ref{eq:saddle}) is well-posed. Let us remark that, typically, in other constrained minimization problems, the proof of the \emph{inf-sup} condition might be quite involved. In the problem discussed in this article, however, this proof is trivial.

\section{Approximation of the problem}\label{Sec:ApproxProblem}
To solve the saddle-point problem \eqref{eq:saddle} we use a mixed finite element discretization.
We now briefly recall the details of this method as it applies to the project at
hand.

The first step in the finite element approximation of problem \eqref{eq:saddle} is the definition of
a finite-dimensional subspace, $\mathcal{V}_h\subset V$ of the infinite-dimensional solution space.
Moreover, we assume that all the finite element functions in $\mathcal{V}_h$ are linear
combinations of piecewise polynomials defined in their corresponding domains,
namely, $\Omega$ or $[0,L]$.  Since the space of the Lagrange multipliers is already a space of
dimension~2 we do not need to introduce a subspace for it and we simply define $Q_h\equiv Q$.

The (mixed) Galerkin finite element method consists in finding
$u_h=(\phi_h,\theta_h)\in \mathcal{V}_h,\; \lambda_h\in Q_h$ such that
\begin{equation}
  \label{eq-fem}
  \begin{aligned}
    	a(u_h,v_h) +b(v_h,\lambda_h) &= f(v_h) ,\\
		b(u_h,\Gamma_h) &= 0 ,
  \end{aligned}
  \end{equation}
holds  for all $v_h=(\psi_h,\delta_h)\in \mathcal{V}_h$ and $\Gamma_h\in Q_h$. In contrast with the
finite element approximations of elliptic boundary value problems, the well-posedness of mixed
methods such as~\eqref{eq-fem} does not follow from the well-posedness of the
corresponding continuous problem. Instead, a new analysis has to be carried out and, in particular,
a discrete \emph{inf-sup} condition needs to be proven. This is typically the most difficult
ingredient of this analysis but, as we will show below, it is not the case for the problem at hand
given the finite dimension of the space~$Q$.

\subsection{Analysis}
The well-posedness of the discrete problem~\eqref{eq-fem} can be established by
following similar steps as in the analysis of the continuous
problem and presented in \cref{Sec:Analysis}. For that, we start
by introducing $B_h$, the discrete counterpart of the operator $B$ used
in \cref{Sec:Analysis}, and now defined by the relation
\begin{equation}
  \label{eq-bh}
  \langle B_{h} v_{h}, \Gamma_{h} \rangle_{Q_{h}'\times Q_{h}} =
  \langle v_{h} ,B^T_{h}\Gamma_h \rangle_{\mathcal{V}\times\mathcal{V}'}=
  b(v_{h}, \Gamma_{h}),
\end{equation}
for all $v_{h}\in\mathcal{V}_{h}$ and $\Gamma_{h}\in Q_{h}$.
Since $B_h$ is surjective and $\ker \mathcal{B}_h^T\subset \ker \mathcal{B}^T$,  $\ker B_h^T = \{0\}$ and $B_h = B{\big|}_{V_h}$. So, we end up in the special case of $\ker B_h\subset \ker B$. Hence, the ellipticity of $a(\cdot,\cdot)$ on $\ker B_h$ follows from the ellipticity of $a(\cdot,\cdot)$ on $\ker B$.
As advanced, the key
condition for the well-posedness of \eqref{eq-fem} is thus the \emph{discrete inf-sup}
condition. However, given that the range of $B_h$ is finite dimensional, it is
closed and thus the \emph{discrete inf-sup} condition holds. From Theorem 2.1. in
\cite{brezzi1991tn} we obtain the following estimate
\begin{equation}
  \| u-u_h\|_{\mathcal{V}} + \|\lambda-\lambda_h\|_{Q\backslash\ker{B^T}} \le
  c \left(
  \inf_{v_h\in \mathcal{V}_h} \|u-v_h\|_{\mathcal{V}}+
  \inf_{q_h\in Q_h}\|\lambda-\Gamma_h\|_Q\right),
\end{equation}
where $c$ is a constant that depends on $a(\cdot,\cdot)$, $b(\cdot,\cdot)$, $\bar{\alpha}$
as in \cref{Theor:ellipt} and $\beta$ as in \cref{Theor:infsup}. Details can be found in \cite{brezzi1991tn}.

\section{Numerical examples}\label{Sec:Results}
In this section, we use first the mixed method~\eqref{eq-fem} for two examples chosen to illustrate its ability to link regions of thermally conductive solids.  Then, and to complete the section, we show that the ideas of the mixed network-continuum formulations can be exploited to solve one particular type of inference problems in diffusive situations.

\subsection{Heat flux between dissimilar regions}
In this example, we choose a square solid with dimensions $2\times 2$ where two subregions, the first with the shape of a circle, and the second an ellipse, are linked through a conductive wire. In a Cartesian coordinate system located at the center of the square and with axes parallel to its edges, the circular region has center $(x, y) = (-0.625, -0.5625)$ and radius $0.5$. The elliptical region is centered at $(x, y)=(0.375, 0.4375)$, and has horizontal and vertical semi-axes of lengths $0.175$ and $0.3$, respectively (see \cref{fig:theoryexample}). The setting is very similar to the one we employed in \cref{sec:linkedform} to describe the mixed-dimensional boundary value problem. See \cref{Fig:sketch} for comparison.

The block has conductivity $\kappa=100$ and the wire has a high conductivity of value $\bar{\kappa}=10,000$. The temperature is constrained at the top and bottom edges to values of $500$ and $300$, respectively. If the problem had no thermal link, the thermal field would be linear with constant vertical heat flux. However, due to the presence of the linking wire, which is selected with a high conductivity, the thermal field is distorted. The elliptical region --closer to the hotter top-- serves as a heat source for the circular region --this one closer to the cooler bottom edge. As a result, the temperature fields in the two connected regions are close to 400.

In principle, the region and its center can lay anywhere in the domain. The centers of these regions determine the amount of temperature going into and coming out of the wire calculated as the average temperature in the corresponding region. To study the convergence of the formulation, we obtain six finite element solutions for this problem, with increasing resolution (see \cref{fig:theoryexample}). In each of the solutions, the linked regions $B_1, B_2$ consist of those elements of the mesh whose centers fall inside the circle and ellipse, respectively. As can be observed in \cref{fig:theoryexample}, coarse meshes do not represent accurately the linked regions (see the top two figures in \cref{fig:theoryexample}). However, when the mesh is refined, both the circle as well as the ellipse are accurately approximated (see, again, \cref{fig:theoryexample}). We note that, alternatively, we could have chosen to employ a mesh that was adapted to the linked regions, but the converged solutions would not differ.

\begin{figure}[htbp!]
    \centering
    \begin{subfigure}{0.35\textwidth}
        \centering
        \includegraphics[width=\textwidth]{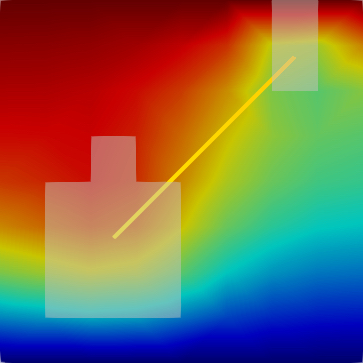}
        \caption{$8^2$ mesh.}
        \label{fig:theoryEx8}
    \end{subfigure}
    \hspace{3em}
    \begin{subfigure}{0.35\textwidth}
        \centering
        \includegraphics[width=\textwidth]{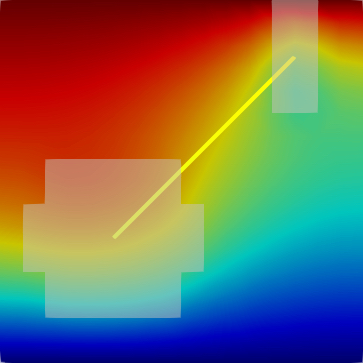}
        \caption{$16^2$ mesh.}
        \label{fig:theoryEx16}
    \end{subfigure}

    \begin{subfigure}{0.35\textwidth}
        \centering
        \includegraphics[width=\textwidth]{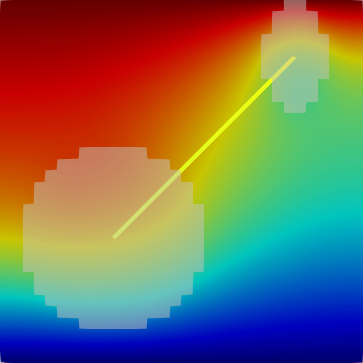}
        \caption{$32^2$ mesh.}
        \label{fig:theoryEx32}
    \end{subfigure}
    \hspace{3em}
    \begin{subfigure}{0.35\textwidth}
        \centering
        \includegraphics[width=\textwidth]{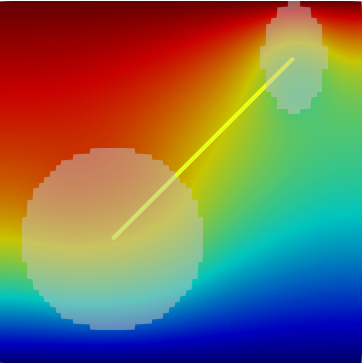}
        \caption{$64^2$ mesh.}
        \label{fig:theoryEx64}
    \end{subfigure}

    \begin{subfigure}{0.35\textwidth}
        \centering
        \includegraphics[width=\textwidth]{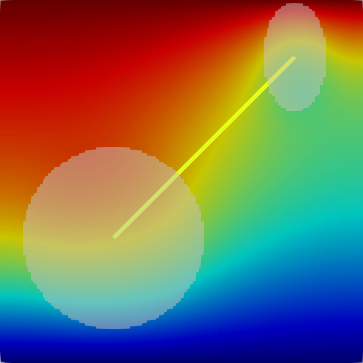}
        \caption{$128^2$ mesh.}
        \label{fig:theoryEx128}
    \end{subfigure}
    \hspace{3em}
    \begin{subfigure}{0.35\textwidth}
        \centering
        \includegraphics[width=\textwidth]{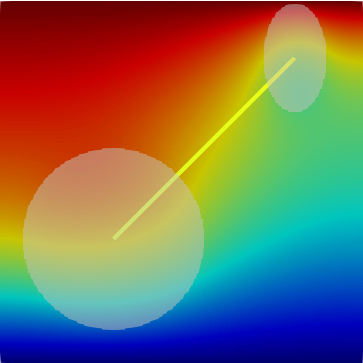}
        \caption{$256^2$ mesh.}
        \label{fig:theoryEx256}
    \end{subfigure}
\vspace*{.5em}\\
    \includegraphics[width=0.27\textwidth]{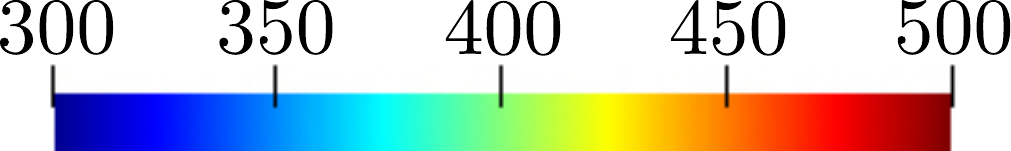}

    \caption{Thermal link connecting dissimilar regions of circle and ellipse inside a block for six meshes with $8^2$, $16^2$, $32^2$, $64^2$, $128^2$, $256^2$ elements (top left to bottom right).}
    \label{fig:theoryexample}
\end{figure}

\subsection{Convergence of increasingly finer connected regions}

This second example examines the classical Fourier problem and its relation with diffusive networks, of the type employed in this work, with the continuum notion of flux. For that, we will consider a square domain of unit side, a material  with conductivity $\kappa=100$, and temperature boundary conditions described by parabolas at the four edges, all with their maximum values at the center of the edges and zero at the vertices. The maximum temperatures at each edge are $100$ (bottom), $200$ (right), $300$ (top), and $400$ (left).

This problem can be solved with a classical numerical method, such as the finite element or finite volume method (see \cref{fig:multipleQuadsFE} for the finite element solution with a mesh of $512^2$ bilinear elements). However, here we solve the problem by discretizing the domain into disjoint and \emph{independent} square regions. These regions are connected with their neighbors through wires with conductivity $\kappa \cdot \Delta y$ (horizontal wires) and $\kappa \cdot \Delta x$ (vertical wires), where $\Delta x$ and $\Delta y$ are the lengths of the corresponding region in $x$ and $y$ direction, respectively. Then, this linked problem is solved using the formulation described in \cref{Sec:ApproxProblem}. Since we use only one finite element for each region, the method explained above is equivalent to a finite volume method where the fluxes are obtained by solving the boundary value problem of the wires.

\cref{fig:multipleQuadsAll} shows the solutions obtained with an increasing number of independent regions. In addition to the thermal field, the edges of the diffusive network are also drawn as well as their temperature field, using the same color scale as in the continuum regions. In the coarser solutions, one can clearly see the discontinuity of the thermal field across the boundaries of the regions since, as explained, they are independent meshes and do not share any node. Also, it is apparent in these coarser solutions that the thermal field in the wires differs from the continuum field at corresponding points.

Remarkably, as the size of the regions is reduced --and hence the length and conductivity of the wires-- the connected solution (\cref{fig:multipleQuadsAll}) resembles more closely the finite element solution (\cref{fig:multipleQuadsFE}) used as reference. This apparent convergence is verified by the results depicted in \cref{fig:multipleQuads_error64}. This figure shows the root-mean-square error of the temperature field of each linked solution compared with the reference finite element solution and obtained as
\begin{equation}
    e = \sqrt{\frac{1}{N} \sum_i^{N} ( \phi_i - \phi_i^{FE} )^2} \ ,
\end{equation}
where $N$ is the number of regions and $\phi_i$ refers to the temperature at the center of the $i-$th region, or element, respectively.

\begin{figure}[htbp!]
    \centering
    \begin{subfigure}{0.35\textwidth}
        \centering
        \includegraphics[width=\textwidth]{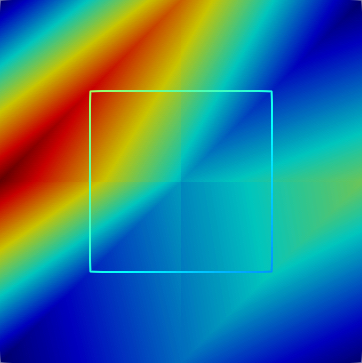}
        \caption{$2^2$ regions}
        \label{fig:multipleQuads2}
    \end{subfigure}
    \hspace{3em}
    \begin{subfigure}{0.35\textwidth}
        \centering
        \includegraphics[width=\textwidth]{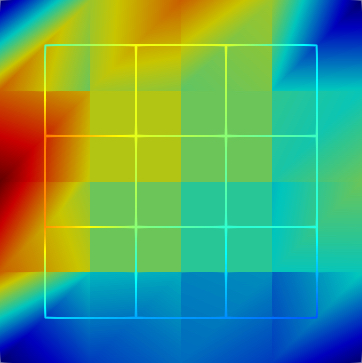}
        \caption{$4^2$ regions}
        \label{fig:multipleQuads4}
    \end{subfigure}

    \begin{subfigure}{0.35\textwidth}
        \centering
        \includegraphics[width=\textwidth]{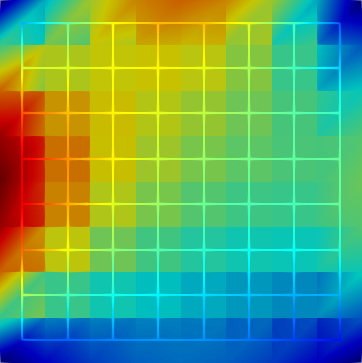}
        \caption{$8^2$ regions}
        \label{fig:multipleQuads8}
    \end{subfigure}
    \hspace{3em}
    \begin{subfigure}{0.35\textwidth}
        \centering
        \includegraphics[width=\textwidth]{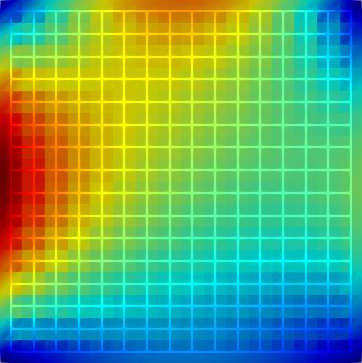}
        \caption{$16^2$ regions}
        \label{fig:multipleQuads16}
    \end{subfigure}

    \begin{subfigure}{0.35\textwidth}
        \centering
        \includegraphics[width=\textwidth]{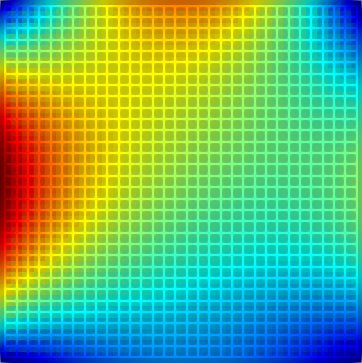}
        \caption{$32^2$ regions}
        \label{fig:multipleQuads32}
    \end{subfigure}
    \hspace{3em}
    \begin{subfigure}{0.35\textwidth}
        \centering
        \includegraphics[width=\textwidth]{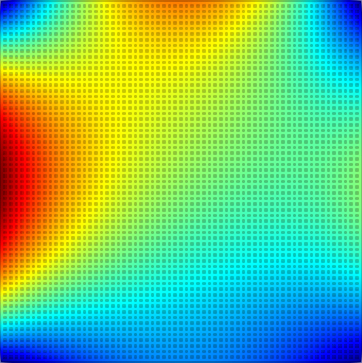}
        \caption{$64^2$ regions}
        \label{fig:multipleQuads64}
    \end{subfigure}

    \includegraphics[width=0.27\textwidth]{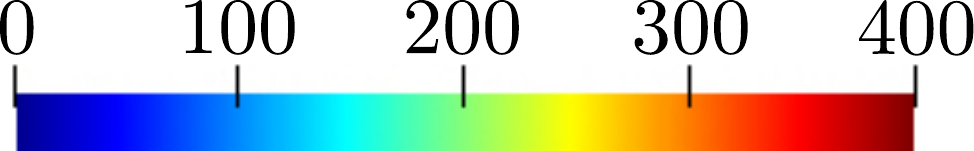}

    \caption{Temperature field in thermal problem solved with independent regions linked with wires. The linking wires are depicted as lines.}
    \label{fig:multipleQuadsAll}
\end{figure}

\begin{figure}[ht!]
	\centering
	\begin{subfigure}{0.4\textwidth}
		\centering
		\vspace{3.5mm}
		\includegraphics[width=\textwidth]{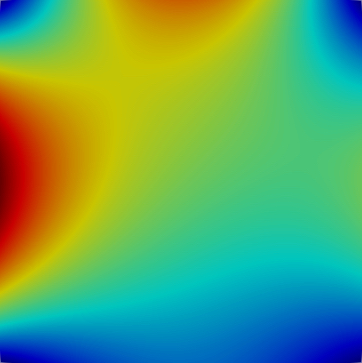}\\
		\vspace{5mm}
		\includegraphics[width=0.55\textwidth]{Figures/multipleQuadsColorbar}
		\vspace{2.75mm}
		\caption{Finite element solution of the several regions and wires problem with a quadratic mesh of $512 \times 512$ elements.}
		\label{fig:multipleQuadsFE}
	\end{subfigure}
	\hspace{1em}
	\begin{subfigure}{0.5\textwidth}
		\centering
		\includegraphics[width=\textwidth]{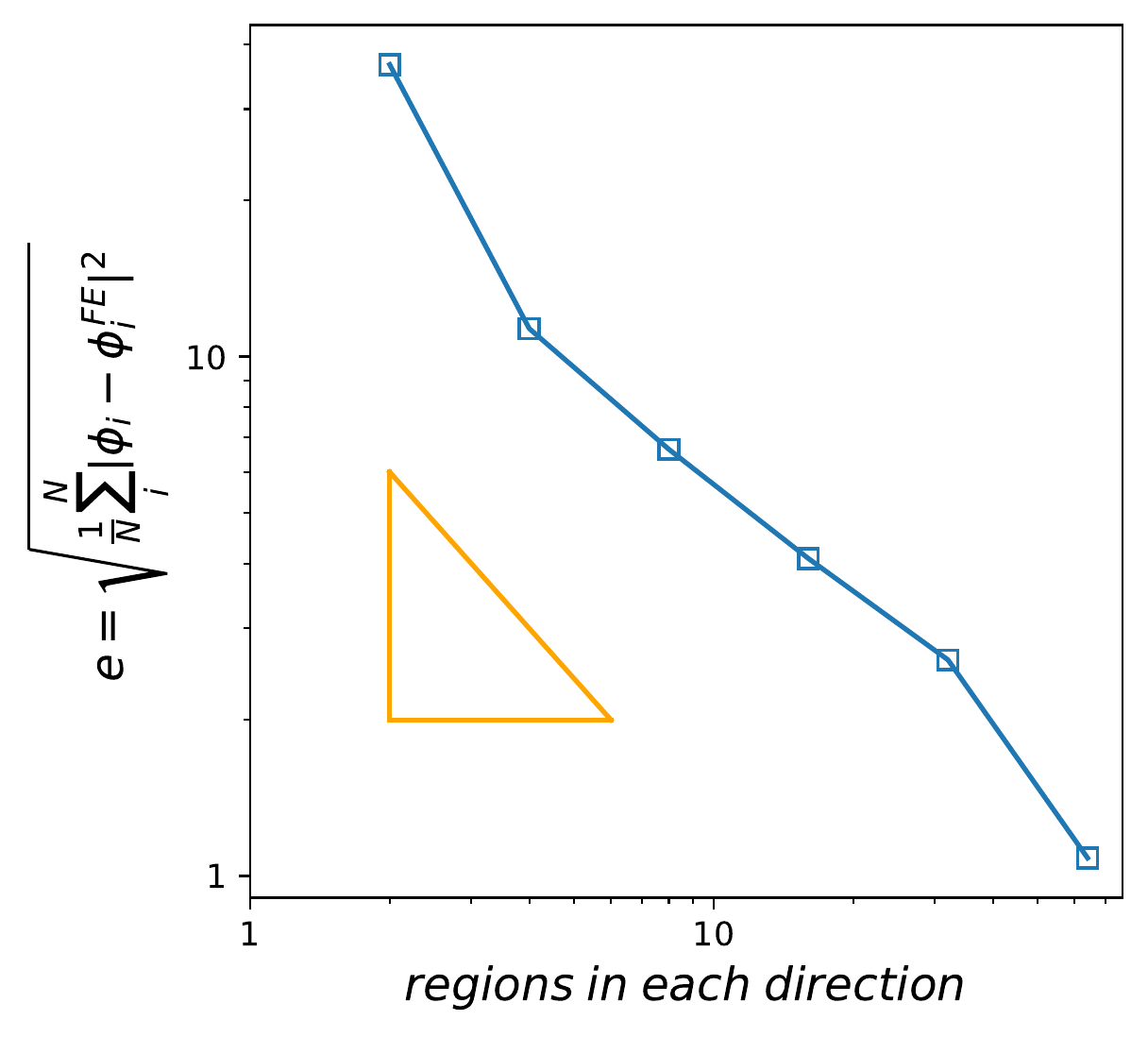}
		\caption{Root-mean-square error of the regions' center temperature compared to the finite element solution. Orange triangle represents linear convergence.}
		\label{fig:multipleQuads_error64}
	\end{subfigure}
	\caption{Finite element solution of the several regions and wires problem and root-mean-square error of the regions' center temperature compared to the finite element solution.}
	\label{fig:multipleQuads_ref_error}
\end{figure}

\subsection{Using linked models to infer diffusive solutions}
\label{subsec:partial_information}
This final example studies the possibility of using the methods introduced in \cref{Sec:ApproxProblem} to infer the optimal thermal field in a domain when only partial information is available. More precisely, we are interested in determining the most likely temperature field in a domain when there is information about the temperature on part of the boundary and the \emph{average} temperature in some regions of the domain.

When studying a Fourier-type diffusive problem, one needs to know the Dirichlet and Neumann conditions in all of the boundaries as well as the heat applied in the interior of the domain, see \cref{eq-poisson}. The lack of either of these data renders the problem ill-posed and no solution can be analytically (nor numerically) obtained. Here we are interested in finding the optimal temperature field of a continuum domain, $\Omega$, where the Dirichlet boundary conditions are partially known but also the average temperature in some regions, $\{B_i\}_{i=1}^{N_{regions}} \; \mathrm{with} \; B_i \subseteq \Omega $. In particular, the heat supplied through the Neumann boundary and the interior of the domain is completely unknown. This is an optimization problem that searches for the temperature field $\phi\in \mathcal{U}$ that verifies
\begin{equation}
    \begin{aligned}
        \min_{\phi\in\mathcal{U}} \quad &
        \mathcal{E} = \int_{\Omega} \frac{\kappa}{2} \, \| \nabla \phi \|^2 \mathrm{d}V, \\
        \mathrm{s.t.} \quad & \frac{1}{\vert B_i\vert} \int_{B_i} \phi \dV = \theta_i, \quad i \in  \{1,...,N_{regions} \} \\
        & \phi = \bar{\phi}, \quad \emptyset
        \neq \partial \Omega_j \subseteq \partial \Omega
    \end{aligned}
    \label{eq-inferenceOptimization}
\end{equation}

The solution to this optimization problem is the thermal field on the solid~$\Omega$ with imposed mean-temperature constraints using \emph{degenerated wires} that link the assumed temperature with the selected regions using the constraint given in \cref{eq-one-theta2,eq-one-theta2}.

To analyze the ability of problem~\eqref{eq-inferenceOptimization} to infer an unknown thermal field, we consider a $2 \times 1$ rectangle with $\kappa = 100$.
We compute a reference solution with boundary conditions and thermal loading depicted in \cref{fig:inferenceBVPcond}. In this figure, the temperature on the left and top edges corresponds to $\phi= 400$ and $\phi = 200$, respectively. Also, a wavy heat supply is imposed in the whole domain of the form
\begin{equation}
    h(x,y) =
    \begin{cases}
        0, & \quad  \mathrm{if} \; (x,y) \in \mathcal{C} \\
        \sin\left( \frac{4\pi (x-x_x)^2 + (y-y_c)^2}{r^2}\right), & \quad \mathrm{otherwise}
    \end{cases}
    ,
\end{equation}
where $\mathcal{C}$ is the circle with center $(x,y) = (1.5, 0.25)$ and radius $r = 0.25$, where the coordinates refer to a Cartesian system located at the bottom left corner of the rectangle, with the $x,y$ axes parallel to the horizontal and vertical directions, respectively. \cref{fig:inferenceBVPsol} shows the solution of this boundary value problem (in what follows, the reference problem) obtained with a finite element mesh of size $128 \times 64$.

For the current example, we define the potentially linked regions as the ones obtained with a simple grid of the whole domain of $8\times8$ equal rectangles. We only employ as known boundary data the temperature on the left edge. Then, we analyze a sequence of discrete problems that employ an increasingly large number of data concerning the average temperature in random regions. Denoting these regions as $B_i$ (see \cref{eq-inferenceOptimization}), we proceed as follows: first, we solve the problem with known Dirichlet data, and a known average temperature~$\theta_i$ on a random region~$B_1$ which we can obtain from the exact solution; then, we  select a second random region $B_2$, obtain its average temperature $\theta_2$ from the exact solution and solve the minimization problem with the two constraints. Proceeding in this fashion, we obtain the solutions illustrated in \cref{fig:inferenceAll}. This figure shows that when the only available data is the average temperature of one region and the Dirichlet boundary condition, the solution of minimal energy is far from the reference (see top figures in \cref{fig:inferenceAll}. As expected, the more data available, the better the obtained solution. When the average temperatures are available in all the rectangular regions, the error is minimal and the solution of the optimization problem is close to the reference solution. Naturally, the small oscillations in the solution are not captured by the approximation because the available data do not have enough resolution. One might expect that in the limit when the regions of known average become very small and cover the whole domain, the solution of the optimization problem converges to the exact solution.

Finally, in \cref{fig:inference_energyError} the energy error $\varepsilon$ --- relative to the energy of the reference solution ---, i.e.,
\begin{equation}
    \varepsilon = \frac{\vert\mathcal{E}^{N_{regions}}-\mathcal{E}^{FE}\vert}{\mathcal{E}^{FE}}
\end{equation}
for the calculated sequence is illustrated. The energy error depends on the random sequence of known data, but it should be always monotonically decreasing. Note that for this sequence, there is almost no improvement in the solution for the last $33$ added data instances. The average temperature of the region added in the $32nd$ iteration makes the difference, reducing the energy error to approximately $20\%$. From that iteration, the solutions of the constrained optimization problems are very similar. (See \cref{fig:inference49,fig:inference64}).

\begin{figure}[t]
	\centering
	\begin{subfigure}{0.45\textwidth}
		\centering
		\includegraphics[width=1.05\textwidth]{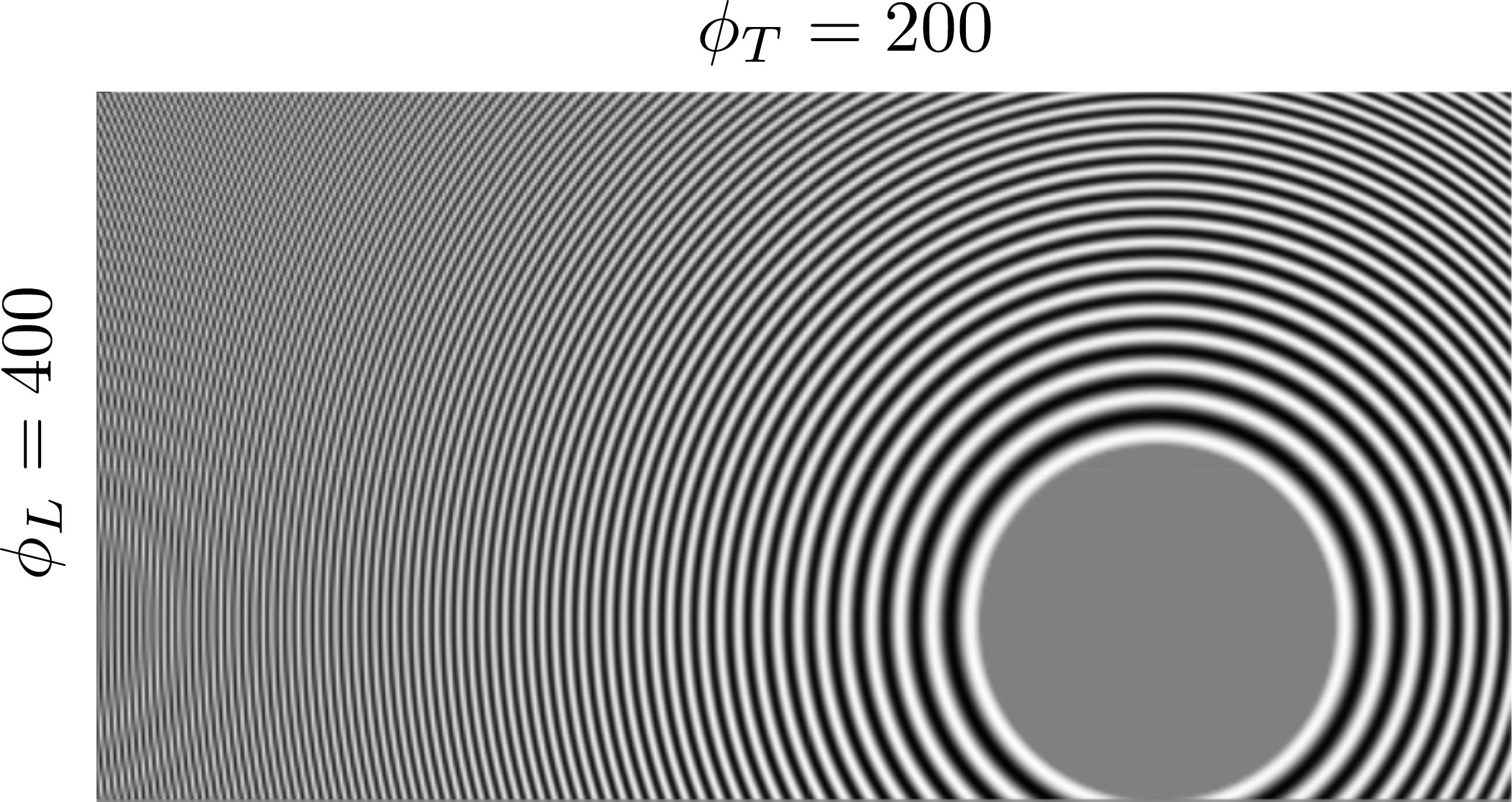}
		\vspace{8mm}
		\caption{Heat supply field and boundary conditions.}
		\label{fig:inferenceBVPcond}
	\end{subfigure}
	\hspace{1em}
	\begin{subfigure}{0.45\textwidth}
		\centering
		\vspace{6mm}
		\includegraphics[width=\textwidth]{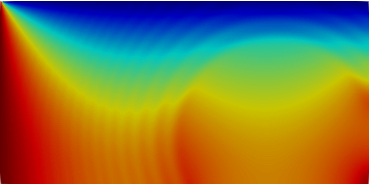} \\
		\vspace{5mm}
		\includegraphics[width=0.45\textwidth]{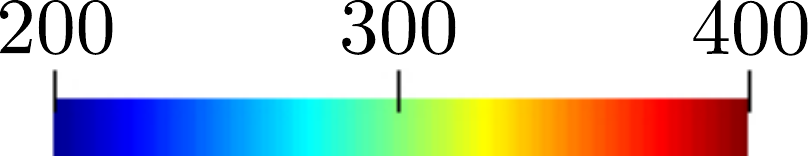}
		\caption{Reference solution with a fe mesh of size $128 \times 64$.}
		\label{fig:inferenceBVPsol}
	\end{subfigure}
	\caption{Reference boundary value problem solution of the inference example.}
	\label{fig:inferenceBVP}
\end{figure}

\begin{figure}[htbp!]
    \centering
    \begin{subfigure}{0.35\textwidth}
        \centering
        \includegraphics[width=\textwidth]{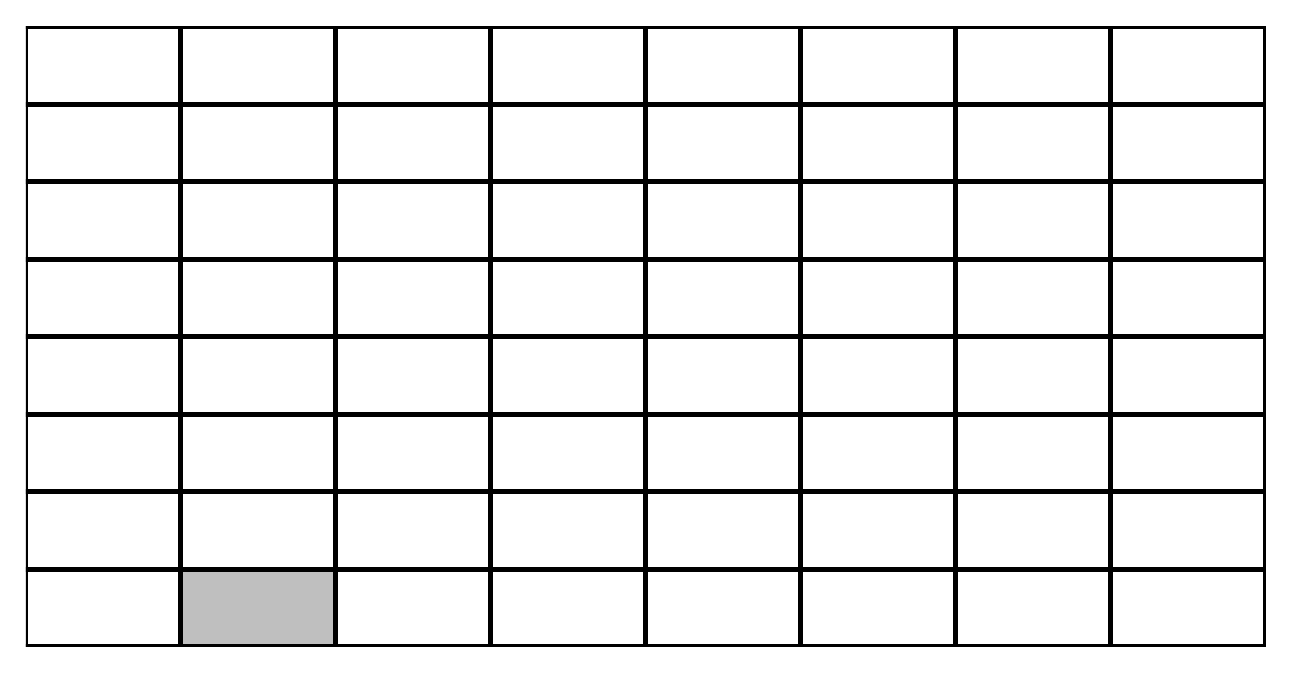}
        \caption{$1$ random region}
        \label{fig:inferenceRegions1}
    \end{subfigure}
    \hspace{3em}
    \begin{subfigure}{0.35\textwidth}
        \centering
        \includegraphics[width=\textwidth]{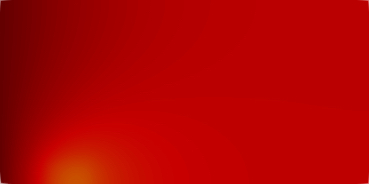}
        \caption{Solution with $1$ random region}
        \label{fig:inference1}
    \end{subfigure}

    \begin{subfigure}{0.35\textwidth}
        \centering
        \includegraphics[width=\textwidth]{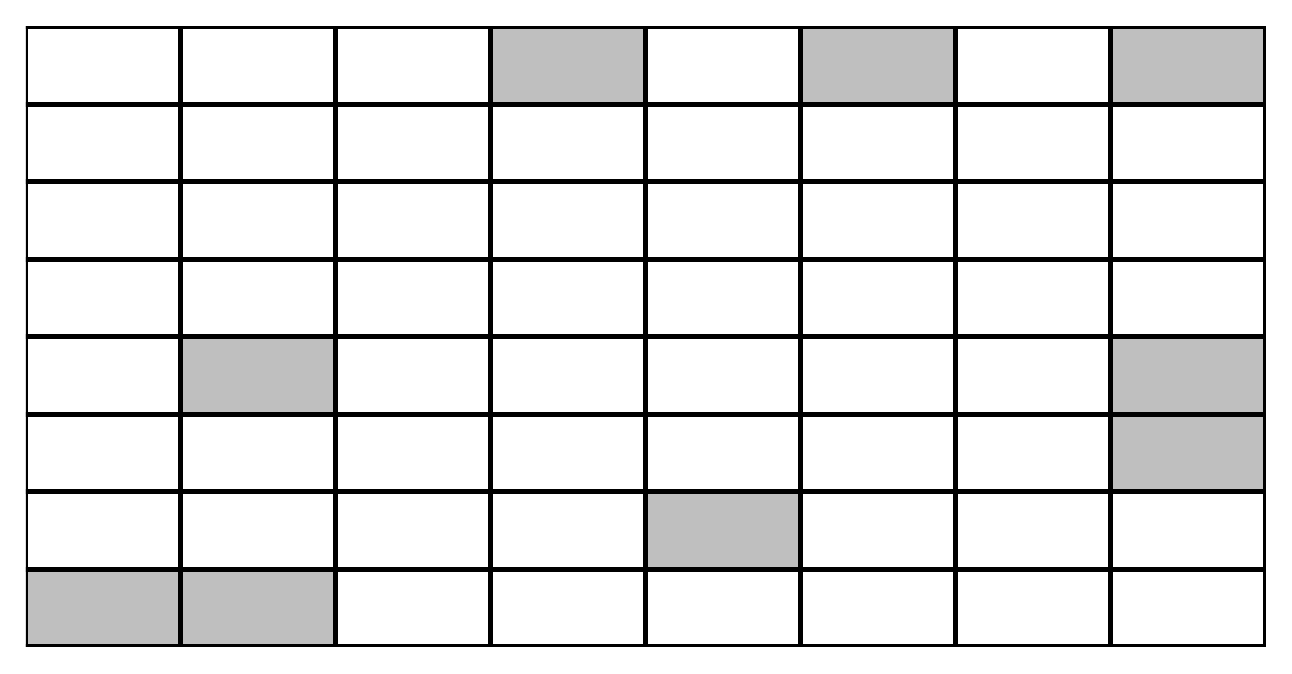}
        \caption{$9$ random regions}
        \label{fig:inferenceRegions9}
    \end{subfigure}
    \hspace{3em}
    \begin{subfigure}{0.35\textwidth}
        \centering
        \includegraphics[width=\textwidth]{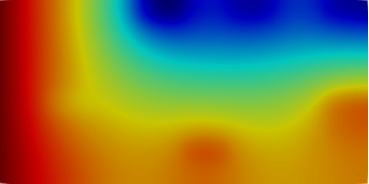}
        \caption{Solution with $9$ random regions}
        \label{fig:inference9}
    \end{subfigure}

    \begin{subfigure}{0.35\textwidth}
        \centering
        \includegraphics[width=\textwidth]{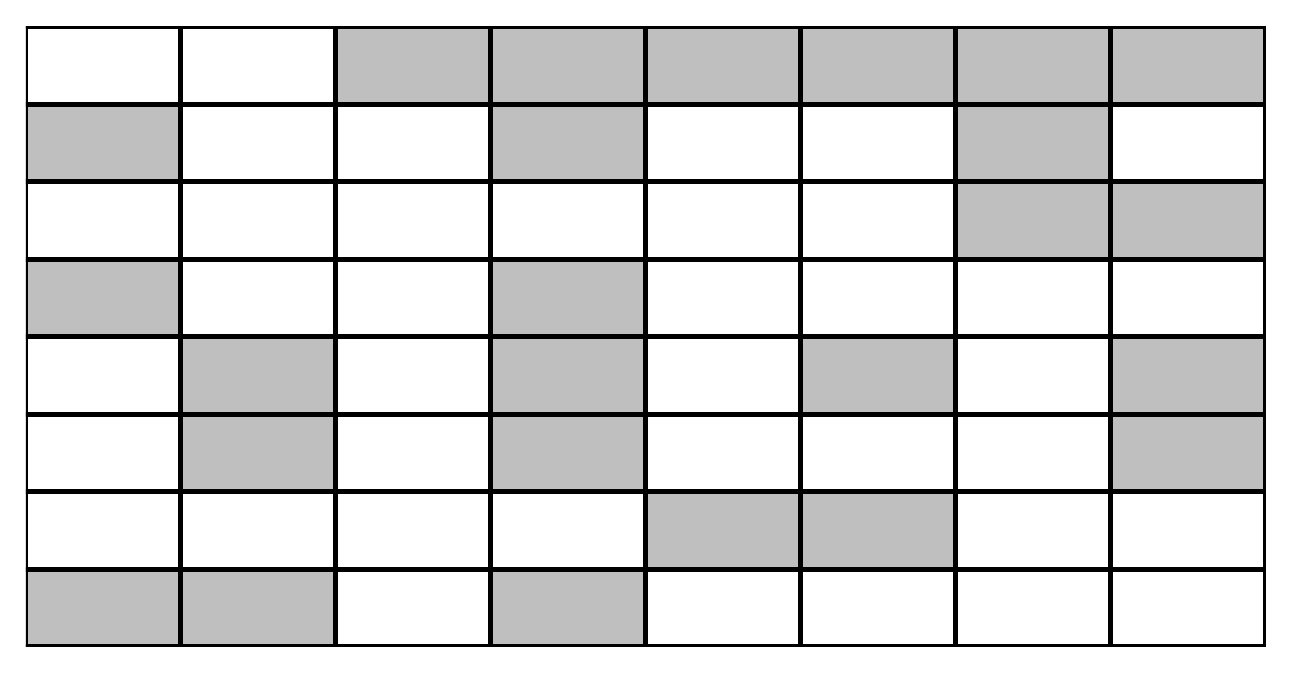}
        \caption{$25$ random region}
        \label{fig:inferenceRegions25}
    \end{subfigure}
    \hspace{3em}
    \begin{subfigure}{0.35\textwidth}
        \centering
        \includegraphics[width=\textwidth]{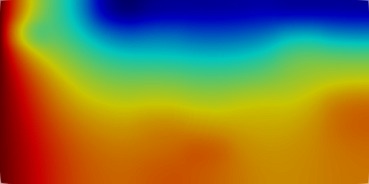}
        \caption{Solution with $25$ random regions}
        \label{fig:inference25}
    \end{subfigure}

    \begin{subfigure}{0.35\textwidth}
        \centering
        \includegraphics[width=\textwidth]{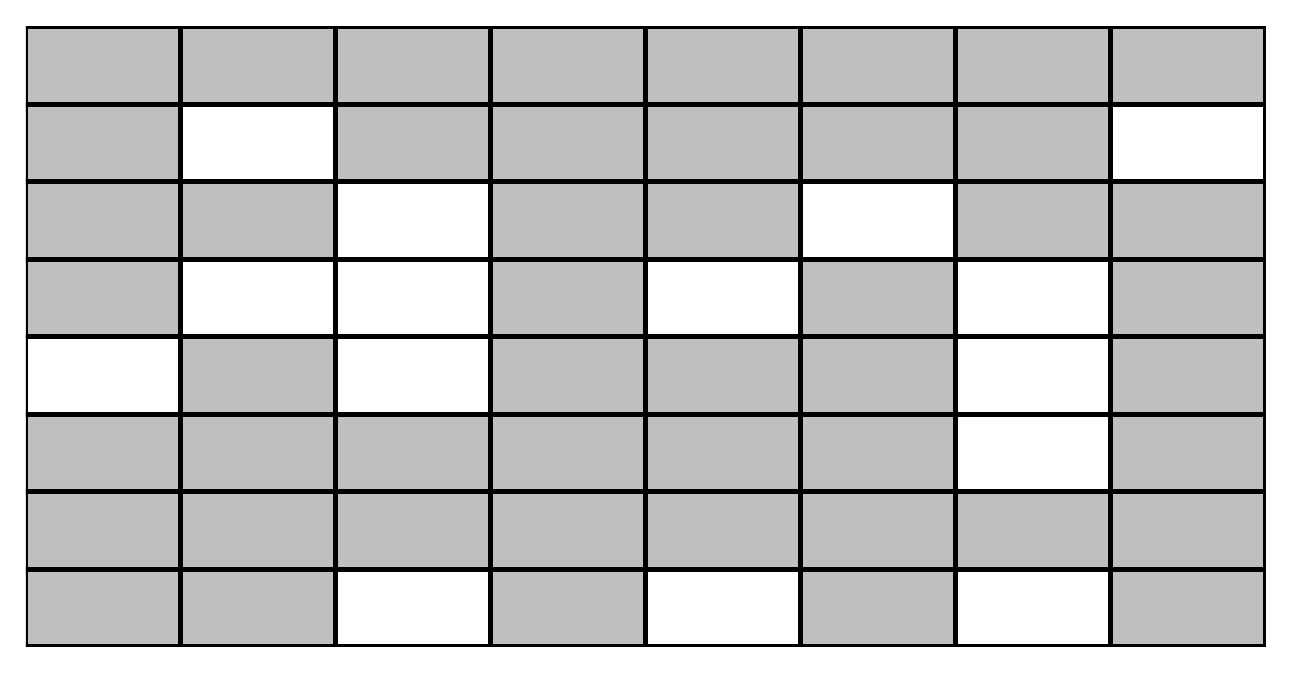}
        \caption{$49$ random regions}
        \label{fig:inferenceRegions49}
    \end{subfigure}
    \hspace{3em}
    \begin{subfigure}{0.35\textwidth}
        \centering
        \includegraphics[width=\textwidth]{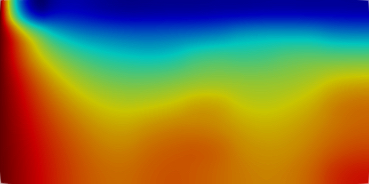}
        \caption{Solution with $49$ random regions}
        \label{fig:inference49}
    \end{subfigure}

    \begin{subfigure}{0.35\textwidth}
        \centering
        \includegraphics[width=\textwidth]{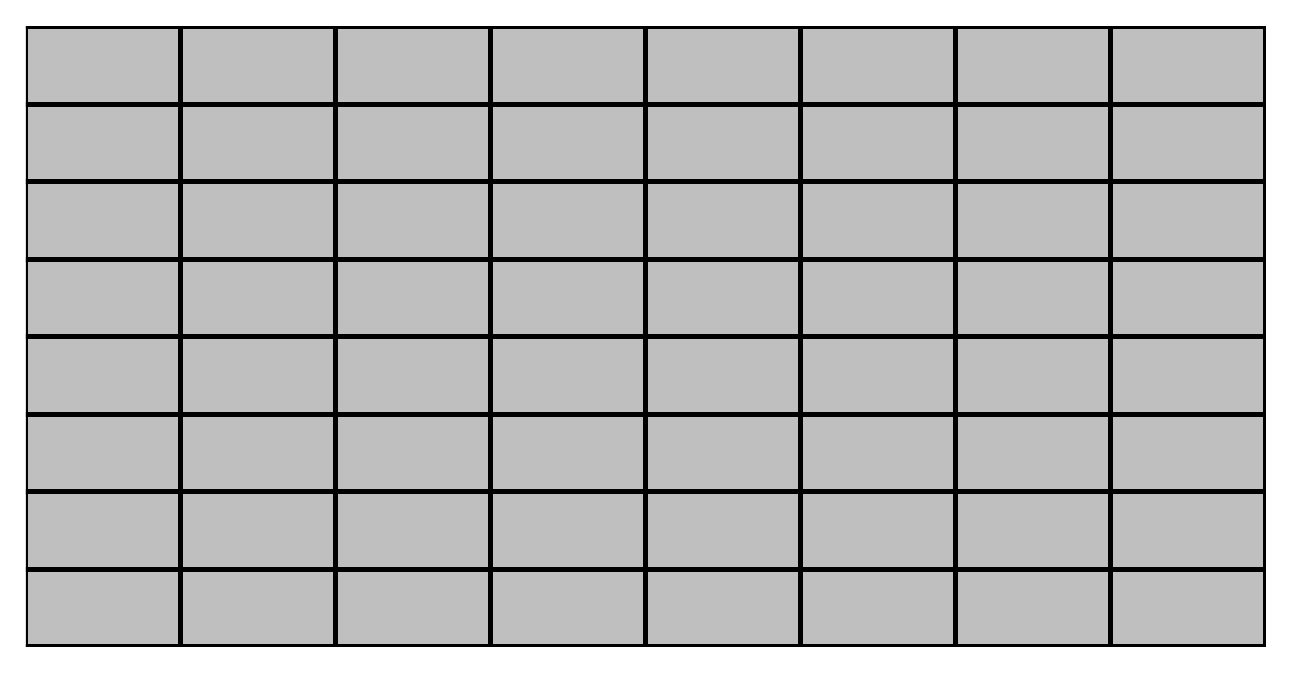}
        \caption{All regions}
        \label{fig:inferenceRegions64}
    \end{subfigure}
    \hspace{3em}
    \begin{subfigure}{0.35\textwidth}
        \centering
        \includegraphics[width=\textwidth]{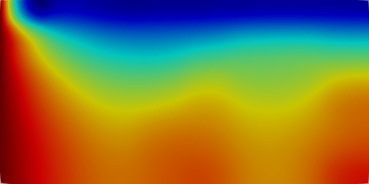}
        \caption{Solution with all regions}
        \label{fig:inference64}
    \end{subfigure}

    \begin{subfigure}{0.35\textwidth}
        \centering
    \end{subfigure}

    \begin{subfigure}{0.35\textwidth}
        \centering
        \includegraphics[width=0.65\textwidth]{Figures/inferenceColorbar}
    \end{subfigure}

    \caption{Inference problem. A sequence of solutions with a different number of
        available region data. Left: gray regions are the ones where the average temperature is available. Right: solution with the corresponding average temperatures and only left edge boundary condition known.}
    \label{fig:inferenceAll}
\end{figure}

\begin{figure}[ht!]
    \centering
    \includegraphics[width=0.5\textwidth]{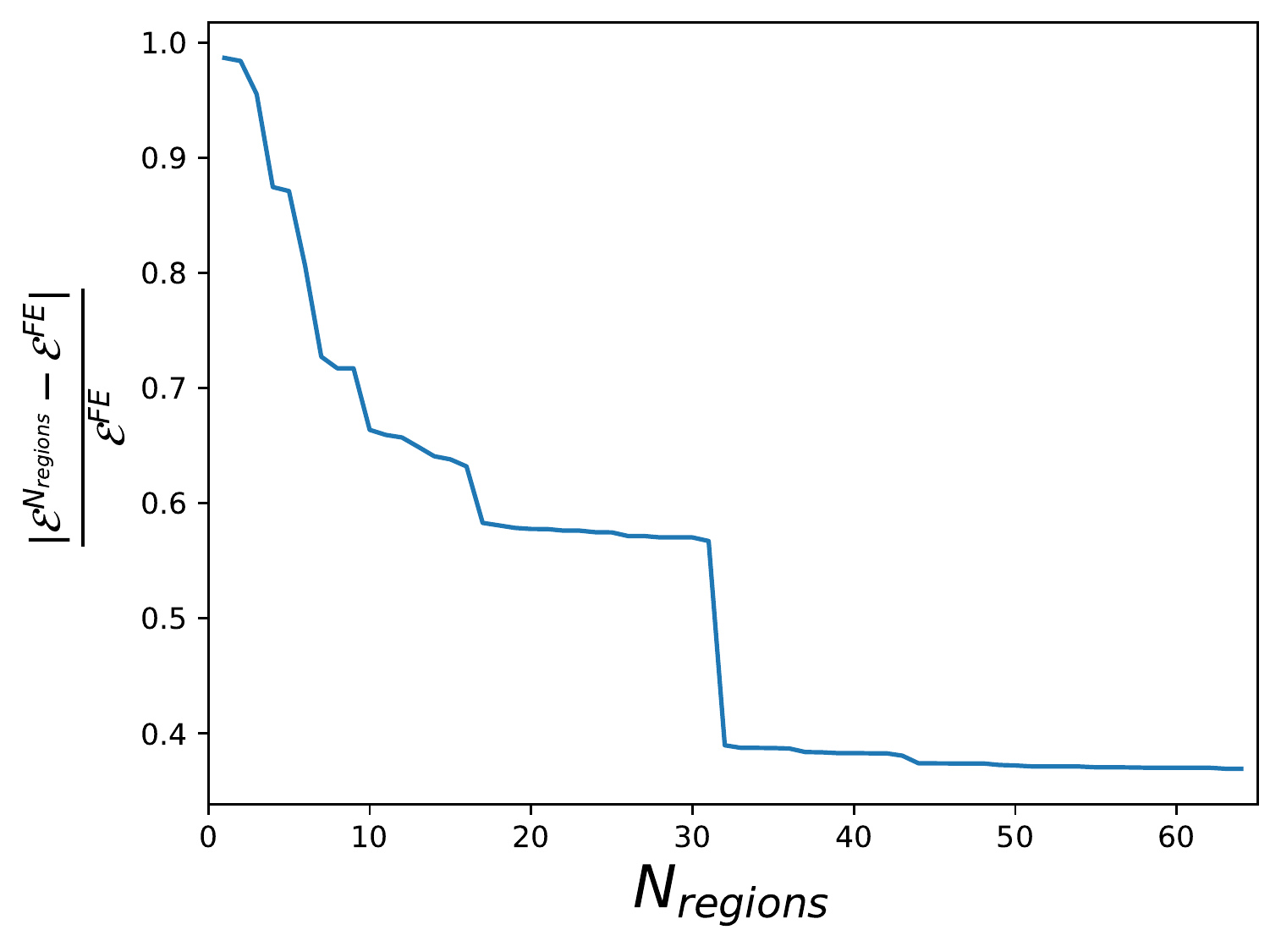}
    \caption{Energy error for the inference problem for different numbers of available region data.}
    \label{fig:inference_energyError}
\end{figure}

\section{Conclusions and main results}\label{Sec:Conclusion}
In this paper, we studied linked continuum/discrete models of diffusion from the theoretical and numerical points of view. More specifically, we consider the solution of diffusion boundary value problems where (discrete) diffusion pathways are introduced between subdomains, more precisely, between the mean values of the diffusing quantity. Although we have described all the results and examples in the language of thermal problems, the scope of the article is more general and applies to any type of linear Fourier-type boundary value problem.

After introducing the setting for the linked formulations of bulk and the one-dimensional diffusive models, we proved the well-posedness of the continuum/discrete linked problem in the corresponding functional spaces by resorting to the theory of constrained boundary value problems. Next, we proved that the well-posedness continues to hold for the approximate problem, i.e. the problem discretized via mixed finite elements. A direct consequence of the two previous results is the convergence of the fully discretized equations of the linked model to its exact solution.

The previous theoretical findings have been illustrated with three numerical examples. They show that the finite element method is always stable and that the links take care of the diffusive fluxes in a different scale than the continuum, but that in the limit both discrete and continuum mechanisms are equivalent. Finally, we showed that the ideas of the method can be used to carry out inference in the solution of diffusive problems with only partial information available about the \emph{average} solution in subsets of the whole domain.

The results of this article can be employed to link the two different classes of diffusive models, namely, the PDE-based descriptions of the continuum and diffusive network-based formulations when the network connects average values of the diffusive field computed over bounded disjoint regions.

\begin{acknowledgements}
  \myack%
\end{acknowledgements}

\bibliography{biblio}
\bibliographystyle{\mybibstyle}

\end{document}